\newtheorem{theorem}{Theorem}[section]
\newtheorem{lemma}[theorem]{Lemma}
\newtheorem{prop}[theorem]{Proposition}
\newtheorem{assumption}[theorem]{Assumption}
\theoremstyle{definition}
\theoremstyle{remark}
\newtheorem{remark}[theorem]{Remark}
\numberwithin{equation}{section}
\DeclareMathAlphabet{\mathsl}{OT1}{cmss}{m}{sl}
\SetMathAlphabet{\mathsl}{bold}{OT1}{cmss}{bx}{sl}
\newcommand{\cF}{\ensuremath{\mathcal F}}
\newcommand{\cL}{\ensuremath{\mathcal L}}
\newcommand{\bbN}{\ensuremath{\mathbb N}}
\newcommand{\bbP}{\ensuremath{\mathbb P}}
\newcommand{\bbR}{\ensuremath{\mathbb R}}
\newcommand{\bbZ}{\ensuremath{\mathbb Z}}
\DeclareMathOperator{\mean}{\mathbf{E}}
\DeclareMathOperator{\prob}{\mathbf{P}} 
\newcommand{\ldef}{\ensuremath{\mathrel{\mathop:}=}}
\newcommand{\rdef}{\ensuremath{=\mathrel{\mathop:}}}
\newcommand{\indicator}{\mathbbm{1}}
\begin{document}

\title[Heat kernel fluctuations and quantitative homogenization for the BTM]
{Heat kernel fluctuations and quantitative homogenization for the one-dimensional Bouchaud trap model}

\author{Sebastian Andres}
\address{Technische Universit\"at Braunschweig}
\curraddr{Institut f\"ur Mathematische Stochastik, Universit\"atsplatz 2, 38106 Braunschweig, Germany}
\email{sebastian.andres@tu-braunschweig.de}
\thanks{}

\author{David A.\ Croydon}
\address{Research Institute for Mathematical Sciences}
\curraddr{Kyoto University, Kyoto 606-8502, Japan}
\email{croydon@kurims.kyoto-u.ac.jp}

\author{Takashi Kumagai}
\address{Waseda University}
\curraddr{3-4-1 Okubo, Shinjuku-ku, Tokyo 169-8555, Japan}
\email{t-kumagai@waseda.jp}

\thanks{}

\subjclass[2000]{60K37, 60F17, 82C41, 82B43}

\keywords{Bouchaud trap model, heat kernel, law of iterated logarithm}

\date{\today}

\dedicatory{}

\begin{abstract}
We present on-diagonal heat kernel estimates and quantitative homogenization statements for the one-dimensional Bouchaud trap model. The heat kernel estimates are obtained using standard techniques, with key inputs coming from a careful analysis of the volume growth of the invariant measure of the process under study. As for the quantitative homogenization results, these include both quenched and annealed Berry-Esseen-type theorems, as well as a quantitative quenched local limit theorem. Whilst the model we study here is a particularly simple example of a random walk in a random environment, we believe the roadmap we provide for establishing the latter result in particular will be useful for deriving quantitative local limit theorems in other, more challenging, settings.
\end{abstract}

\maketitle

\section{Introduction}

A principal motivation for studying random walks in random environments is to explore the relationships that might exist between the microscopic properties of a disordered medium and its large scale conductivity. One particular question of interest is whether homogenization occurs, in the sense that the deformities in the random medium are `smoothed out', so that only an averaged effect remains; the alternative being that the local inhomogeneities persist in the scaling limit and some kind of anomalous behaviour is seen.

In the case of the one-dimensional Bouchaud trap model (as will be introduced precisely in Section~\ref{model} below), it is known that there is a phase transition between a regime where homogenization occurs, and one where it does not. The first goal of this article is to highlight the differences between the behaviour of the heat kernel in these two regimes, demonstrating how one sees a local limit theorem in the former, and almost-sure heat kernel fluctuations that are closely linked to the inhomogeneity of the associated invariant measure in the latter (see Section~\ref{hkf}). In the case of homogenization, it is natural to ask how quickly this occurs. Accordingly, as our second main contribution, we derive a number of quantitative homogenization results for the one-dimensional Bouchaud trap model (see Section~\ref{qsh} for details). These include a Berry-Esseen-type theorem, which quantifies the rate at which the marginals of the discrete model converge to a normal distribution. Moreover, combining such statements with our volume estimates, we are able to establish a quantitative quenched local limit theorem. We expect the somewhat basic analytic approach we adopt for arriving at this conclusion is quite robust and should be adaptable to other settings.

\subsection{The model}\label{model}

The \emph{Bouchaud trap model} (BTM) is a random walk in a random medium given by a landscape of traps which retain the walk for some amount of time. It has its origins in the statistical physics literature, where it was proposed as a simple effective model for the dynamics of spin-glasses on certain time-scales, see \cite{Bou92}. The one-dimensional versions of the model considered in this paper were first studied in \cite{FIN99}; for a general overview of the BTM we refer to \cite{BC06}. A distinctive feature of the BTM is that when the distribution of the traps is  sufficiently heavy-tailed, the trapping mechanism becomes relevant even in the large scale behaviour of the system, and the model exhibits localization \cite{CM15, CM16, CM17, FIN99, Mu15}, subdiffusivity \cite{Ca15,FIN} and aging \cite{BC05, Ce06,FIN}.

To describe the model in detail, we start by defining a trapping landscape $\tau=(\tau_x)_{x\in \bbZ}$, which is a collection of independent and identically distributed (i.i.d.)\ random variables, built on a probability space with
probability measure $\prob$, such that
\begin{equation}\label{tautail}
\prob[\tau_0 \geq u]= u^{-\alpha}, \qquad \forall u\geq 1,
\end{equation}
for some $\alpha \in (0,\infty)$. (We expect that most of our results will hold for wider classes of distributions that are bounded below and have similar tail asymptotics, but since our focus is on highlighting the possible range of behaviours for the BTM, rather than giving the most general statements possible, we have chosen to restrict to this particular family of distributions, which simplifies various calculations.) Conditional on $\tau$, the dynamics of the (symmetric) BTM are given by a continuous-time $\bbZ$-valued Markov chain $X=(X_t)_{t\geq 0}$,  with generator
\begin{align}\label{eq:geneBTM}
\cL f(x) = \frac{1}{2\tau_x} \sum_{y\sim x} \big(f(y)-f(x) \big).
\end{align}
Thus the dynamics of $X$ are particularly simple. Namely, the process $X$ waits at a vertex $x$ an exponentially distributed time with mean $\tau_x$; after this time, it jumps to one of the two neighbours of $x$ chosen uniformly at random. It is easy to see that the measure $\sum_{x\in \bbZ} \tau_x \delta_x$ is a reversible measure for the Markov chain $X$.

In the following, we denote by $\mean$ the expectation operator associated with the probability measure $\prob$ determining the law of $\tau$. Given a particular realisation of $\tau$, we write $P^\tau_x$ for the law of $X$, when started from $x$; this is the quenched law of $X$. 
The quenched transition density (or heat kernel) is given by
\begin{equation}\label{qtd}
p^\tau_t(x,y):=\frac{P^\tau_x(X_t=y)}{\tau_y}.
\end{equation}
Note that the normalization by $\tau_y$ is natural, since $\tau$ is a stationary measure for the reversible Markov chain, and so $p^\tau(x,y)$ is a symmetric function. (For our asymptotic results, the particular normalization is unimportant, apart from the precise scaling statements of Theorems \ref{thm:QHK}(iii) and \ref{thm:QLCLT}.) Moreover, we write
\[\mathbb{P}(\cdot)=\int P_0^\tau(\cdot)\mathbf{P}(d\tau)\]
for the annealed law of the process $X$ started from 0.

\subsection{Scaling limits and heat kernel fluctuations}\label{hkf}

A well-known phenomenon in the study of stochastic processes in random media is the occurrence of anomalous heat kernel behaviour when the medium is highly inhomogeneous. Possible manifestations of such irregular behaviour are the presence of fluctuations for the short- or long-time asymptotics of the on-diagonal heat kernel, or scaling towards non-Gaussian processes under non-diffusive space-time scaling. Some examples include the random conductance model, Brownian motion on certain random fractals, and simple random walks on the incipient infinite cluster of a critical Galton-Watson tree or on two-dimensional uniform spanning trees, see \cite{ACK} for a recent survey about the occurrence of heat kernel fluctuations. In this paper, we add the one-dimensional symmetric Bouchaud trap model to this class of examples. (We note that the results of this section were announced in \cite[Section~12.5]{ACK}.)

We first recall the scaling limits known to hold for the process $X$. In each case in the statement of the subsequent theorem, the topology considered is the usual Skorohod $J_1$ topology on the space of c\`{a}dl\`{a}g paths $D([0,\infty))$.

\begin{theorem}\label{thm:SL}
\hspace{10pt}
\begin{enumerate}
\item[(i)] If $\alpha\in(0,1)$, then $(\varepsilon X_{t/\varepsilon^{1+1/\alpha}})_{t\geq 0}$ converges in distribution under the annealed law $\mathbb{P}$ to the Fontes-Isopi-Newman diffusion of \cite[Definition~1.2]{FIN} with parameter $\alpha$.
\item[(ii)] If $\alpha=1$, then $(\varepsilon X_{t\log(\varepsilon^{-1})/\varepsilon^{2}})_{t\geq 0}$ converges in distribution under the annealed law $\mathbb{P}$ to Brownian motion.
\item[(iii)] If $\alpha>1$, then, for $\mathbf{P}$-a.e.\ realisation of the environment $\tau$, $(\varepsilon X_{t\mathbf{E}(\tau_0)/\varepsilon^{2}})_{t\geq 0}$
converges in distribution under the quenched law $P^\tau_0$ to Brownian motion.
\end{enumerate}
\end{theorem}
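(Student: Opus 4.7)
The plan is to use the standard time-change representation of the BTM as the common engine for all three parts. Let $Z=(Z_t)_{t\geq 0}$ be a continuous-time symmetric simple random walk on $\bbZ$ with unit jump rate, independent of the environment $\tau$, and let $L^Z_t(x) = \int_0^t \mathbbm{1}_{\{Z_s=x\}}\, ds$ be its occupation-time density. Setting
\[
A^\tau_t \ldef \sum_{x\in\bbZ} \tau_x\, L^Z_t(x),
\]
one reads off from the generator \eqref{eq:geneBTM} that $X \stackrel{d}{=} Z\circ (A^\tau)^{-1}$. Each regime of $\alpha$ then reduces to combining (a) Donsker's invariance principle for $Z$ together with the joint convergence of its local time $L^Z$ to Brownian local time, and (b) a suitable rescaling limit for the random speed measure $\nu^\tau_\varepsilon \ldef \sum_{x\in\bbZ}\tau_x\,\delta_{\varepsilon x}$, followed by continuity of the time-change operation on an appropriate path space.

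For part (i), since $\mathbf{P}[\tau_0\geq u]=u^{-\alpha}$ with $\alpha\in(0,1)$, I would show via the classical convergence to Poisson point measures of extremal sums of heavy-tailed i.i.d.\ variables that $\varepsilon^{1/\alpha}\nu^\tau_\varepsilon$ converges in $\mathbf{P}$-distribution, vaguely, to the $\alpha$-stable random atomic measure $\rho$ driving the FIN diffusion, namely a Poisson random measure on $\bbR\times(0,\infty)$ with intensity proportional to $u^{-1-\alpha}\,du\,dx$ whose atoms $(x_i, v_i)$ define $\rho = \sum_i v_i\,\delta_{x_i}$. The time scaling $t/\varepsilon^{1+1/\alpha}$ is exactly what is needed so that, jointly with the Brownian rescaling of $Z$, the time-changed process converges to $B\circ \psi^{-1}$ with $\psi(t)=\int_\bbR L^B_t(x)\,\rho(dx)$, which is the Fontes-Isopi-Newman diffusion.

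For part (ii) ($\alpha=1$), the expectation $\mean\tau_0$ diverges only logarithmically: a direct computation with truncation at level $1/\varepsilon$ shows $\log(\varepsilon^{-1})^{-1}\varepsilon\sum_{|x|\leq 1/\varepsilon}\tau_x$ converges in $\mathbf{P}$-probability to a deterministic constant, and one can promote this to vague convergence of $(\log(\varepsilon^{-1}))^{-1}\varepsilon\,\nu^\tau_\varepsilon$ to a constant multiple of Lebesgue measure. Inserting this into the time-change representation, together with Donsker for $Z$, yields annealed convergence of $\varepsilon X_{t\log(\varepsilon^{-1})/\varepsilon^2}$ to Brownian motion. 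For part (iii) ($\alpha>1$), $\mean\tau_0<\infty$, and the strong law of large numbers implies that for $\mathbf{P}$-a.e.\ realisation of $\tau$, $\varepsilon\,\nu^\tau_\varepsilon \to \mean(\tau_0)\cdot\mathrm{Leb}$ vaguely. Since Donsker's theorem for $Z$ does not involve $\tau$, the quenched convergence of $\varepsilon X_{t\mean(\tau_0)/\varepsilon^2}$ to Brownian motion is immediate from continuity of the time change against a Lebesgue-type limit measure.

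The main obstacle lies in part (i): establishing $J_1$-continuity of the map $(z,m)\mapsto z\circ (\int L^z_\cdot(x)\,m(dx))^{-1}$ at limit points where $m$ is purely atomic. The FIN limit genuinely has large jumps corresponding to the walker escaping the deepest traps, so to transfer $J_1$-convergence one must align the trap-escape times in the discrete model with the atoms of $\rho$ under the rescaling, while ruling out `extra' oscillations in the prelimit outside small neighbourhoods of those atoms. This is handled by a combined coupling and tightness argument carried out via the original FIN construction, after which specialising the input $(\nu^\tau_\varepsilon, Z, L^Z)$ yields each of the three stated limits. Regimes (ii) and (iii) are simpler because the limiting speed measure is absolutely continuous, and standard continuity of time-change applies.
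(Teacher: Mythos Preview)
Your proposal is correct in outline and follows the standard time-change strategy that underlies the results the paper cites. However, you should be aware that the paper does \emph{not} give an independent proof of this theorem: it simply invokes \cite[Theorem~3.2]{BCCR} for parts (i) and (iii), and \cite[Remark~3.3]{BCCR} together with the resistance-scaling framework of \cite{Croyres,CHKres} for part (ii). So there is no ``paper's own proof'' to compare against beyond these citations.

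That said, your sketch is essentially the argument carried out in those references. The decomposition $X=Z\circ(A^\tau)^{-1}$, the vague convergence of the rescaled speed measure (to the $\alpha$-stable random measure, to a multiple of Lebesgue, or to $\mean(\tau_0)\cdot\mathrm{Leb}$ according to the regime), and the joint use of Donsker plus local-time convergence are exactly the ingredients of the FIN/BCCR approach. You have also correctly isolated the genuine technical point: in part (i) the limiting speed measure is purely atomic, so the time-change map is not continuous in any naive topology, and one needs the more careful argument (originating in \cite{FIN} and made precise in \cite{BCCR}) to obtain $J_1$-convergence. For parts (ii) and (iii) the limit measure is Lebesgue and the time-change inverse is continuous, so the standard continuity arguments (or equivalently the resistance-form machinery of \cite{Croyres,CHKres}) apply directly. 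If you intend to write out a self-contained proof rather than cite, the work is almost entirely in part (i); the remainder is routine once the speed-measure convergence is in hand.
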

\begin{proof}
Part~(i) is contained within \cite[Theorem~3.2(ii)]{BCCR}. (We note that all the claims of \cite[Theorem~3.2]{BCCR} readily extend to the case $\beta=0$.) See also \cite{FIN} for the introduction of the limiting process and a scaling limit for a single marginal of the process, and \cite{BC05} for a related result in the non-symmetric case. Part~(iii) is contained within  \cite[Theorem~3.2(i)]{BCCR}. Part~(ii) is described in \cite[Remark~3.3]{BCCR}, and can also be checked by applying the resistance scaling techniques of \cite{Croyres,CHKres}. Indeed, via such an approach, a similar result was checked in the more challenging case of the random walk on the range of random walk in four dimensions in \cite{Cshir}.
\end{proof}

The above result shows that the BTM exhibits quenched homogenization when $\alpha>1$ and anomalous scaling behaviour for $\alpha\leq 1$. Our goal here is to show that in the latter case, the on-diagonal part of the heat kernel fluctuates around a leading term of polynomial order, and in the former case, the heat kernel satisfies a quenched local limit theorem. To capture the natural scaling of the on-diagonal heat kernel, we introduce the following $\alpha$-dependent functions:
\[\phi_\alpha(t):=\left\{
                    \begin{array}{ll}
                      t^{-\frac{1}{1+\alpha}}, & \hbox{if $\alpha\in(0,1)$;} \\
                      t^{-\frac{1}{2}}(\log t)^{-\frac{1}{2}}, & \hbox{if $\alpha=1$;} \\
                      t^{-\frac12}, & \hbox{if $\alpha>1$.}
                    \end{array}
                  \right.\]
Specifically, we are able to prove the following quenched heat kernel estimates. Note that the exponents in the limsup part of (i) match those for the limiting Fontes-Isopi-Newman diffusion, as verified in \cite[Theorem~4.6]{CHK}. Similarly, the liminf part of (i) essentially matches the result of \cite[Theorem~4.5]{CHK}, modulo a more careful computation of log exponents that is possible in our simpler setting. (In \cite{ACK}, part~(iii) is presented slightly differently, and erroneously misses the factor of $\mathbf{E}(\tau_0)$ that arises in the scaling of the invariant measure.)

\begin{theorem}\label{thm:QHK}
\hspace{10pt}
\begin{enumerate}
\item[(i)] If $\alpha\in(0,1)$, then it $\prob$-a.s.\ holds that
\[\limsup_{t\rightarrow\infty}\frac{p_t^\tau(0,0)}{\phi_\alpha(t)(\log\log t)^{\frac{1-\alpha}{1+\alpha}}}\in(0,\infty),\]
and
\[\liminf_{t\rightarrow\infty}\frac{p_t^\tau(0,0)}{\phi_\alpha(t)(\log t)^{-\theta}}=\begin{cases}
0,  & \text{if $\theta \leq\frac{1}{1+\alpha}$},\\
\infty, & \text{if $\theta>\frac{3}{\alpha}$.}
\end{cases}\]
\item[(ii)] If $\alpha=1$, then it $\prob$-a.s.\ holds that
\[\limsup_{t\rightarrow\infty}\frac{p_t^\tau(0,0)}{\phi_\alpha(t)}\in(0,\infty),\]
and
\[\liminf_{t\rightarrow\infty}\frac{p_t^\tau(0,0)}{\phi_\alpha(t)(\log\log t)^{-\theta}}=\begin{cases}
0,  & \text{if $\theta=\frac12$},\\
\infty, & \text{if $\theta>3$.}
\end{cases}\]
\item[(iii)] If $\alpha>1$, then it $\prob$-a.s.\ holds that, for every $x_0\in[0,\infty)$ and compact interval $I\subseteq(0,\infty)$,
    \[\lim_{\lambda\rightarrow\infty}\sup_{|x|\leq x_0}\sup_{t\in I}\left|\mathbf{E}(\tau_0)\lambda p^\tau_{\mathbf{E}(\tau_0)\lambda^2t}\left(0,\lfloor \lambda x\rfloor\right)-\frac{1}{\sqrt{2\pi t}}e^{-\frac{x^2}{2 t}} \right|=0.\]
    In particular,
    \[\lim_{t\rightarrow\infty}\phi_\alpha(\mathbf{E}(\tau_0)t)^{-1}p_t^\tau(0,0)=\frac{1}{\sqrt{2\pi}}.\]
\end{enumerate}
\end{theorem}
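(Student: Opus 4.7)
The plan is to reduce the asymptotic behaviour of $p_t^\tau(0,0)$ in all three regimes to that of the invariant-measure volume $V(r):=\sum_{|x|\le r}\tau_x$, and then to analyse $V$. Standard arguments for reversible Markov chains in one dimension (Nash inequality together with the resistance-type lower bound specific to $\bbZ$) give matching on-diagonal quenched estimates
\[
\frac{c_1}{V(\rho(t))} \;\le\; p_t^\tau(0,0) \;\le\; \frac{c_2}{V(\rho(t))},
\]
where the space scale $\rho(t)$ is defined implicitly by $\rho\cdot V(\rho)\asymp t$. Fluctuations of the heat kernel thus mirror those of $V$, with $r\leftrightarrow t$ linked by this implicit scaling.

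For part (i), $V(r)$ is a partial sum of i.i.d.\ random variables in the $\alpha$-stable domain of attraction, and Fristedt's LIL gives $\liminf_r V(r)(\log\log r)^{(1-\alpha)/\alpha}/r^{1/\alpha}\in(0,\infty)$ almost surely, together with a matching $\limsup$ statement. Transferring the $\liminf$ of $V$ through $\rho\cdot V(\rho)\asymp t$ converts it into the $\limsup$ statement for $p_t^\tau(0,0)$ at the exact rate $\phi_\alpha(t)(\log\log t)^{(1-\alpha)/(1+\alpha)}$ (the arithmetic $(1-\alpha)/\alpha\cdot\alpha/(1+\alpha)=(1-\alpha)/(1+\alpha)$ fixing the exponent). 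The $\liminf$ side is driven by unusually large $V$, which for $\alpha<1$ is dictated by the extremal statistic $M(r):=\max_{|x|\le r}\tau_x$; the two thresholds $\theta\le 1/(1+\alpha)$ and $\theta>3/\alpha$ come respectively from Borel--Cantelli on the single-site event $\{\tau_0\ge u\}$ near the origin (forcing $p_t^\tau(0,0)$ to be small along a subsequence) and from the union bound $\mathbf{P}(M(r)\ge u)\le 2ru^{-\alpha}$ (giving uniform lower bounds on $p_t^\tau(0,0)$). Part (ii) proceeds identically in the marginal case $\alpha=1$, with $V(r)/(r\log r)$ playing the role of the stable object and the max-LIL for i.i.d.\ Pareto$(1)$ variables producing the $(\log\log t)^{-\theta}$ scaling.

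For part (iii), the quenched invariance principle from Theorem~\ref{thm:SL}(iii) yields weak convergence of the rescaled process, which I would promote to a local statement in three steps. First, derive Gaussian upper bounds $p_t^\tau(x,y)\le C/\sqrt{t}$ from Nash's inequality, using the strong LLN $V(r)\sim 2\mathbf{E}(\tau_0)r$ (available because $\tau_0$ is integrable when $\alpha>1$). Second, establish parabolic H\"older regularity of $(t,y)\mapsto p_t^\tau(0,y)$ on the diffusive scale, so that the rescaled densities form an equicontinuous family on compact sets of $(t,x)$. Third, apply a tightness-plus-identification argument: any subsequential limit of $\mathbf{E}(\tau_0)\lambda p_{\mathbf{E}(\tau_0)\lambda^2 t}^\tau(0,\lfloor\lambda x\rfloor)$ must coincide with the standard Gaussian density, pinpointed by combining the invariance principle with the LLN for $\tau$. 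The normalisation by $\tau_y$ in the definition \eqref{qtd} is critical here: since $P_0^\tau(X_t=y)$ is locally proportional to $\tau_y$ (as $\tau$ is the invariant measure), it is precisely the ratio that is smooth in $y$ and admits a continuous scaling limit.

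The main obstacle is the tightness of constants in the heat-kernel bound of the first step: to obtain the sharp $(\log\log t)^{(1-\alpha)/(1+\alpha)}$ exponent in (i), rather than merely the same exponent up to an unknown constant, the comparison $p_t^\tau(0,0)\asymp 1/V(\rho(t))$ must be precise enough that the logarithmic correction survives the inversion $\rho\mapsto\rho\cdot V(\rho)$. A secondary difficulty for (iii) is that the H\"older constant in the parabolic regularity estimate must not deteriorate in the disorder even though individual $\tau_y$ can be as large as $\lambda^{1/\alpha+\epsilon}$; the integrability $\mathbf{E}(\tau_0)<\infty$ available for $\alpha>1$ is what allows this to be pushed through by a volume-doubling / Poincar\'e argument.
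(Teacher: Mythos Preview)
Your overall strategy is right, and for part~(iii) it matches the paper's argument closely (invariance principle plus heat-kernel equicontinuity; the paper obtains the latter from the elementary Dirichlet-energy H\"older bound $|p_t^\tau(0,x)-p_t^\tau(0,y)|^2\le |x-y|\,t^{-1}p_t^\tau(0,0)$ together with the trivial $p_t^\tau(0,0)\le Ct^{-1/2}$ coming from $\tau_x\ge 1$, so no Poincar\'e/doubling machinery is needed).

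The real problem is in parts~(i)--(ii). You assert a clean two-sided bound
\[
\frac{c_1}{V(\rho(t))}\le p_t^\tau(0,0)\le \frac{c_2}{V(\rho(t))},\qquad \rho\,V(\rho)\asymp t,
\]
but such a matching lower bound is not available without volume doubling, and here the volume is emphatically not doubling: along the subsequences that drive the liminf, a single huge trap at radius $n$ can make $V(n)$ enormous while $V(n-1)$ and $V(2n)$ are comparatively small or large in uncontrolled ways. What one actually has (and what the paper uses) is an asymmetric pair of estimates: an upper bound $p^\tau_{2nV(0,n)}(0,0)\le 2/V(0,n-1)$, and a lower bound $p^\tau_{nV(0,n)/2}(0,0)\ge V(0,n)^2/(16\,V(0,2n)^3)$. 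The cube in the denominator is not cosmetic---it is precisely the source of the exponent $3/\alpha$ in the liminf statement: using the a.s.\ upper envelope $V(0,2n)\le c\,n^{1/\alpha}(\log n)^{1/\alpha+\varepsilon}$ and the lower envelope $V(0,n)\ge c\,n^{1/\alpha}(\log\log n)^{1-1/\alpha}$, the ratio $V(n)^2/V(2n)^3$ gives $p_t\ge c/(n^{1/\alpha}(\log n)^{3/\alpha+\cdots})$. Your attribution of the $3/\alpha$ to a union bound on $M(r)$ is therefore off; the $3$ is an artefact of the non-doubling lower heat-kernel bound, not of the tail of the maximum. Conversely, if your clean two-sided estimate were correct, the liminf thresholds would match, contradicting the gap $(\tfrac{1}{1+\alpha},\tfrac{3}{\alpha}]$ deliberately left open in the statement.

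For the limsup, your argument does go through, because along the subsequence realising the \emph{liminf} of $V$ one genuinely has $V(2n_i)\asymp V(n_i)$, so $V(n_i)^2/V(2n_i)^3\asymp 1/V(n_i)$ there; this is how the paper obtains the sharp $(\log\log t)^{(1-\alpha)/(1+\alpha)}$.
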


\begin{remark}
(i) We anticipate that the various $\limsup_{t\to\infty}$ and $\liminf_{t\to\infty}$ appearing in the above result are deterministic (i.e. independent of the environment $\tau$), though our arguments do not yield this. \\  
(ii)
Similarly to \cite[Corollary 1.4]{Cshir}, for $\alpha=1$, it would be possible to prove a local limit theorem in probability (i.e.\ a statement analogous to part (iii) above, but with the limit holding in probability, and appropriate logarithmic factors appearing in the scaling).
\end{remark}

We conclude the section with asymptotic statements concerning the distribution and expectation of the heat kernel that justify the choice of $\phi_\alpha$ as the natural scaling function. We note that part (ii) of the result is similar to that of \cite{CKtree}, in that we are only able to check the appropriate integrability of the heat kernel for a certain range of the parameter $\alpha$.

\begin{theorem}\label{thm:AHK}
\hspace{10pt}
\begin{enumerate}
\item[(i)] For every $\alpha>0$,
\[\lim_{\lambda\rightarrow\infty}\liminf_{t\rightarrow\infty}
\prob\left(\lambda^{-1}\leq \phi_\alpha(t)^{-1}p_t^\tau(0,0)\leq \lambda\right)=1.\]
\item[(ii)] For every $\alpha>0$, there exists an $\varepsilon>0$ such that
\[0<\liminf_{t\rightarrow\infty} \phi_\alpha(t)^{-\varepsilon}\mean\left(p_t^\tau(0,0)^\varepsilon\right)
\leq \limsup_{t\rightarrow\infty}  \phi_\alpha(t)^{-\varepsilon}\mean\left(p_t^\tau(0,0)^\varepsilon\right)<\infty.\]
Moreover, if $\alpha>\frac{3}{2}$, then we may take $\varepsilon=1$.
\end{enumerate}
\end{theorem}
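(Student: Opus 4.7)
In the diffusive regime $\alpha > 1$, Theorem~\ref{thm:QHK}(iii) already provides the $\prob$-a.s.\ convergence $\phi_\alpha(\mathbf{E}(\tau_0)t)^{-1} p_t^\tau(0,0) \to 1/\sqrt{2\pi}$, from which part~(i) and the lower bound in part~(ii) (via Fatou) follow immediately in this regime. The substantive work therefore lies in the subdiffusive regime $\alpha \leq 1$ for part~(i), and in the upper bound on $\mathbf{E}[p_t^\tau(0,0)^\varepsilon]$ for part~(ii).

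For part~(i) with $\alpha \leq 1$, the plan is to sandwich the on-diagonal heat kernel between constant multiples of $V^\tau(0, R(t))^{-1}$, where $V^\tau(0,R) := \sum_{|x| \leq R} \tau_x$ is the invariant mass of the ball and $R(t)$ is of order $\phi_\alpha(t)^{-1}$ (so $R(t) \asymp t^{\alpha/(1+\alpha)}$ for $\alpha<1$, $R(t) \asymp (t/\log t)^{1/2}$ for $\alpha=1$). First, the upper bound $p_t^\tau(0,0) \leq C/V^\tau(0, R(t))$ should follow from the standard Nash / on-diagonal decay argument combined with the 1D commute-time identity (effective resistance equals distance) on the event that the commute time to distance $R(t)$ is of the expected order $t$; atypical environments are absorbed by the crude bound $p_t^\tau(0,0) \leq 1/\tau_0 \leq 1$. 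Second, the matching lower bound $p_t^\tau(0,0) \geq c/V^\tau(0, CR(t))$ I would obtain by confining the walk to $[-CR(t), CR(t)]$ for $C$ large (quantified via 1D hitting-time estimates) and then comparing $X_t$ to the stationary distribution $\tau/V^\tau$ of the reflected walk on this interval, using that the mixing time is at most of order $R \cdot V^\tau(0, CR)$, which is $o(t)$ with probability approaching one. Third, I would invoke classical extreme-value / heavy-tailed-sum estimates to show that $V^\tau(0, R(t))/R(t)^{1/\alpha}$ (for $\alpha<1$) or $V^\tau(0,R(t))/(R(t) \log R(t))$ (for $\alpha=1$) is tight both above $0$ and below $\infty$, which combines with the above sandwich to give part~(i).

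For part~(ii), the lower bound follows at once from part~(i) by choosing $\lambda$ large so that the event there has probability at least $1/2$. For the upper bound, the Nash-type estimate reduces the task to bounding $\mathbf{E}[V^\tau(0, R(t))^{-\varepsilon}]$ by $C R(t)^{-\varepsilon}$; for sufficiently small $\varepsilon > 0$ this is achievable by splitting according to whether $V^\tau(0, R(t))$ is larger than a constant times its natural scale, with the bad-environment contribution absorbed via $p_t^\tau(0,0)^\varepsilon \leq \tau_0^{-\varepsilon}$ together with $\mathbf{E}[\tau_0^{-\varepsilon}]<\infty$. The sharper claim with $\varepsilon = 1$ when $\alpha > 3/2$ requires a stronger first-moment concentration estimate on $V^\tau(0, R)$: for $\alpha > 2$ this is a direct Chebyshev bound, while for $\alpha \in (3/2, 2]$ one truncates $\tau_x$ at a suitable scale and combines Chebyshev on the truncated sum with a Cauchy--Schwarz estimate on the event that an overly deep trap is present (using $\mathbf{E}[\tau_0^{-2}] < \infty$); the exponent $3/2$ is the precise boundary at which these two contributions balance at scale $R(t)^{-1}$. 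The main obstacle is expected to be the mixing step in the lower bound of part~(i), since the mixing time of the confined walk is itself heavy-tailed (driven by the deepest trap in $[-CR(t), CR(t)]$) and must be shown to be $o(t)$ with controlled failure probability uniform in the environment.
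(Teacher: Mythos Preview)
Your plan for part~(i) is sound and would work, but it is more elaborate than what the paper does. The paper simply invokes a continuous-time version of \cite[Proposition~1.3]{KM}, whose only input is the volume tightness
\[
\lim_{\lambda\to\infty}\inf_{n}\prob\big(\lambda^{-1}v_\alpha(n)\le V(0,n)\le\lambda v_\alpha(n)\big)=1,
\]
which is immediate from the stable limit theorem. The mixing-time argument you propose for the lower bound is not needed: the paper's Proposition~\ref{prop:hke}(iii) gives a pointwise lower bound $p^\tau_{\frac{n}{2}V(0,n)}(0,0)\ge V(0,n)^2/(16V(0,2n)^3)$ directly from an exit-time/Green's function computation, avoiding any spectral or mixing input. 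So the ``main obstacle'' you identify is in fact absent.

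For part~(ii), your reduction of the upper bound to controlling $\mean[V^\tau(0,R(t))^{-\varepsilon}]$ has a genuine gap. The on-diagonal bound $p^\tau_t(0,0)\le C/V^\tau(0,R(t))$ is \emph{conditional} on the commute-time constraint $R(t)V^\tau(0,R(t))\lesssim t$; when the volume is atypically large this constraint fails on a set whose probability does not decay in $t$, and your fallback bound $p^\tau_t(0,0)\le \tau_0^{-1}\le 1$ then contributes a term $\prob(\text{bad})$ to $\mean[p^\tau_t(0,0)^\varepsilon]$ that does not vanish. The paper circumvents this by making the radius $\lambda$-dependent: for each $\lambda\ge 1$ it takes $n$ with $h_\alpha^{-1}(t/\lambda)\in[n,n+1)$, shows that on the event $\{\lambda^{-1}v_\alpha(n)\le V(0,n-1)\le 2V(0,n)\le\lambda v_\alpha(n)\}$ one has $p^\tau_t(0,0)\le c\lambda^\theta\phi_\alpha(t)$ (with $\theta=\tfrac{3}{2}$ for $\alpha>1$), and then uses the volume tail bounds of Proposition~\ref{prop:tail} to get
\[
\prob\big(p^\tau_t(0,0)\ge c\lambda\phi_\alpha(t)\big)\le C\lambda^{-c'/\theta},\qquad c'<\alpha.
\]
Integrating in $\lambda$ gives the moment bound for small $\varepsilon$, and the threshold $\alpha>3/2$ arises precisely because one needs $c'/\theta>1$, i.e.\ $\alpha>\theta=3/2$. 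Your proposed mechanism for the $3/2$ threshold (truncation plus Chebyshev balanced against a Cauchy--Schwarz term using $\mean[\tau_0^{-2}]<\infty$) does not match this; note that $\mean[\tau_0^{-2}]$ is finite for every $\alpha$ since $\tau_0\ge1$, so it cannot be the source of a threshold.
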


\subsection{Quantitative stochastic homogenization}\label{qsh}

During the last decade, starting from the works \cite{GO11,GO12}, the area of quantitative stochastic homogenization of elliptic and parabolic equations has witnessed tremendous progress, see e.g.\ \cite{AG18, armstrong2016b, armstrong2016, armstrong2014,BellaFFOtto2016,BellaFO2016,DaGu,DGO20,GNO15,GNOreg,GO15,GO14}, and the two monographs \cite{AK22,AKM19}. Closely related to the topic of homogenization in PDE theory is the problem of deriving invariance principles for the associated random walk in random environment. For the one-dimensional BTM with $\alpha>1$, the results stated in the last subsection include an invariance principle in Theorem~\ref{thm:SL}(iii) and a local limit theorem in Theorem~\ref{thm:QHK}(iii). As noted at the start of the introduction, it is natural to seek to strengthen such statements and to provide some corresponding quantitative stochastic homogenization results. In the case when $\alpha>2$, that is when the variance of $\tau_x$ is finite, we are able give some estimates on the speed of convergence in the invariance principle and the local limit theorem. We first quantify the speed of convergence in the invariance principle  by means of quenched and annealed Berry-Esseen-type estimates. Throughout the article, we use the $O$ and $o$ notation to describe asymptotic bounds (as $t\rightarrow \infty$) in the usual way.

\begin{theorem}[Berry-Esseen theorem] \label{thm:be}
Suppose that $\alpha>2$ and let $\sigma^2 \ldef 1/ \mean[\tau_0]$.
\begin{enumerate}
\item[(i)] It holds that
\[  \sup_{x \in \bbR} \Big|  \bbP \big[X_t \leq \sigma x \sqrt{t}  \ \big] - \Phi(x)\Big|  \; = \; O\left(t^{- \frac 1 {10}} (\log t)^{\frac 1 5}\right),\]
where $\Phi(x):=(2\pi)^{-1/2} \int_{-\infty}^x e^{-u^2/2} \, du$ denotes the distribution function of the standard normal distribution.
\item[(ii)]  Fix $\theta\in (0, \frac{\alpha-2}{10(2\alpha-1)})$. For $\prob$-a.e.\ $\tau$, it holds that
\[\sup_{x \in \bbR} \Big|   P_0^\tau \big[X_t \leq \sigma x \sqrt{t} \ \big]  - \Phi(x)\Big| =O\left(t^{-\theta}\right).\]
\end{enumerate}
\end{theorem}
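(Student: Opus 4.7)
My plan is to represent the BTM as a time-changed simple random walk and reduce both parts of the theorem to quantitative control of the associated clock. Let $S = (S_n)_{n\ge 0}$ be the discrete-time simple symmetric random walk on $\bbZ$ started at $0$, let $(e_i)_{i\ge 0}$ be i.i.d.\ Exp$(1)$ random variables, and assume $S$, $(e_i)$, $\tau$ are mutually independent. Then $X_t = S_{N_t}$, where
\[
T_n \ldef \sum_{i=0}^{n-1} \tau_{S_i}\, e_i, \qquad N_t \ldef \max\{n : T_n \le t\}.
\]
Since $\mean[\tau_0 e_0] = \mean[\tau_0] = 1/\sigma^2$, we expect $N_t \approx \sigma^2 t$, so $X_t$ is close to $S_{\sigma^2 t}$, to which the classical Berry--Esseen estimate applies; the whole task is to quantify this approximation.

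For part (i), grouping $T_n = \sum_y \tau_y E_n(y)$ with $E_n(y) \ldef \sum_{i<n,\, S_i = y} e_i$ and $L_n(y) \ldef |\{i<n : S_i = y\}|$ and conditioning on $S$, one finds
\[
\mathrm{Var}(T_n \mid S) \,=\, \mathrm{Var}(\tau_0)\sum_y L_n(y)^2 + \mean[\tau_0^2]\, n.
\]
The identity $\sum_y L_n(y)^2 = n + 2\sum_{0\le i<j<n}\mathbbm{1}\{S_i = S_j\}$ and the local CLT bound $\bbP[S_k = 0] = O(k^{-1/2})$ give $\mean[\sum_y L_n(y)^2] = O(n^{3/2})$; combined with $\alpha > 2$, this yields $\mathrm{Var}(T_n) = O(n^{3/2})$ and hence, by Chebyshev, $\bbP[|N_t - \sigma^2 t| > \delta t] = O(\delta^{-2} t^{-1/2})$. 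Separately, Hoeffding's inequality and the reflection principle control the modulus of continuity of $S$:
\[
\bbP\!\left[\,\max_{|m-\sigma^2 t| \le \delta t} |S_m - S_{\sigma^2 t}| > r\sqrt{\delta t}\right] \le C\, e^{-cr^2}.
\]
Outside these two bad events $|X_t - S_{\sigma^2 t}| \le r\sqrt{\delta t}$, so the classical Berry--Esseen bound $|\bbP[S_{\sigma^2 t} \le y] - \Phi(y/\sigma\sqrt t)| = O(t^{-1/2})$ together with the Lipschitz property of $\Phi$ yields
\[
\sup_{x\in\bbR}\big|\bbP[X_t \le \sigma x\sqrt t] - \Phi(x)\big| \le C\left(r\sqrt{\delta} + e^{-cr^2} + \delta^{-2} t^{-1/2} + t^{-1/2}\right),
\]
and choosing $r \asymp \sqrt{\log t}$, $\delta \asymp t^{-1/5}(\log t)^{-1/5}$ produces the rate $O(t^{-1/10}(\log t)^{1/5})$.

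For part (ii) the splitting strategy is unchanged, but the annealed Chebyshev bound on $N_t - \sigma^2 t$ must be upgraded to an almost-sure polynomial rate. I would decompose
\[
T_n - n/\sigma^2 \;=\; \underbrace{\sum_y (\tau_y - \mean[\tau_0])\, \mean[L_n(y)]}_{\text{environmental bias}} \;+\; \big(T_n - E^\tau[T_n]\big).
\]
Given $\tau$, the first summand is a deterministic sum of i.i.d.\ centred $\tau$-random variables weighted by $\mean[L_n(y)] = O(\sqrt n)$, supported on a range of size $O(\sqrt n)$; truncating $\tau$ at level $n^{a}$ and applying Fuk--Nagaev / Rosenthal-type inequalities gives $\prob$-moment bounds of order $p < \alpha$, which, via Borel--Cantelli along a geometric subsequence $t_k = 2^k$, convert into almost-sure polynomial decay. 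The second summand, a quenched centred fluctuation, is handled by the same conditional variance computation together with analogous moment estimates. The exponent $\theta < (\alpha-2)/(10(2\alpha-1))$ reflects the optimisation: the numerator $\alpha - 2$ is the moment excess beyond the variance threshold, the denominator $2\alpha - 1$ emerges from the optimal truncation-versus-Borel--Cantelli trade-off, and the factor $10$ is the annealed Berry--Esseen rate from part (i). The main obstacle is precisely this quenched concentration step: controlling a sum over the visited range in an environment with only polynomial tails, while extracting an almost-sure uniform-in-$x$ Berry--Esseen rate, requires careful moment bookkeeping and truncation and is the most technical ingredient.
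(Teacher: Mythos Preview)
Your approach is genuinely different from the paper's. The paper does not split into $S_{N_t}$ versus $S_{\sigma^2 t}$; instead it applies Haeusler's martingale Berry--Esseen theorem directly to $N_s \ldef X_{st}/(\sigma\sqrt t)$, so that the rate is governed by $E_0^\tau[|\langle X\rangle_t/t - \sigma^2|^2]^{1/5}$ together with a jump term (which is $O(t^{-1})$ by a L\'evy-system computation). The quadratic variation is then related to the clock $A_t = \int_0^t \tau_{Y_s}\,ds$ of the continuous-time simple random walk $Y$, and the annealed second-moment bound $\mean E_0^\tau[|A_t - t/\sigma^2|^2] = O(t^{3/2})$ is the same calculation you do. Your route via the classical Berry--Esseen bound for $S$ plus a modulus-of-continuity argument is more elementary and avoids the martingale machinery; the price is that your optimisation actually yields $(\log t)^{2/5}$, not $(\log t)^{1/5}$: with $r\asymp\sqrt{\log t}$ and $\delta\asymp t^{-1/5}(\log t)^{-1/5}$ one gets $r\sqrt\delta\asymp t^{-1/10}(\log t)^{2/5}$. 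The paper's martingale approach gives the sharper logarithm because it bypasses the modulus-of-continuity step.

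For part~(ii) there is a real gap. Your decomposition $T_n - n/\sigma^2 = \sum_y \bar\tau_y\,\mean[L_n(y)] + (T_n - E^\tau[T_n])$ isolates a deterministic environmental term that can indeed be handled by Rosenthal-type bounds and Borel--Cantelli. But the second piece has quenched second moment
\[
\mathrm{Var}^\tau(T_n) \;=\; \sum_y \tau_y^2\,\mean[L_n(y)] \;+\; \mathrm{Var}_S\!\Big(\sum_y \tau_y L_n(y)\Big),
\]
and the last term is precisely the quenched variance of a random walk in random scenery with fixed heavy-tailed scenery $\tau$. Saying it is ``handled by the same conditional variance computation together with analogous moment estimates'' skips the main difficulty: there is no obvious moment inequality that controls this pathwise in $\tau$, because the summands $\tau_y L_n(y)$ are strongly dependent through $S$ and the $\tau_y$ have only $\alpha$ moments. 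The paper's solution is to truncate the scenery at level $t^\varepsilon$, apply Talagrand's convex-distance concentration inequality to the truncated functional (viewed as a Lipschitz function of the bounded variables $\tau_y\wedge t^\varepsilon$), bound the truncation error via counting estimates on $\{y:\bar\tau_y>t^\varepsilon\}$, and then optimise $\varepsilon$; this is what produces the exponent $\tfrac14 - \tfrac{3}{4(2\alpha-1)}$ for the quenched clock bound and hence the final $\tfrac{\alpha-2}{10(2\alpha-1)}$. Your heuristic for the exponent (``the factor $10$ is the annealed rate from part~(i)'') is not quite right: the $1/5$ appears in both parts from the same optimisation (or, in the paper, from Haeusler's $1/(2n+1)$ with $n=2$), and the remaining $(\alpha-2)/(2(2\alpha-1))$ is exactly the output of the truncation--Talagrand trade-off, not of a generic Fuk--Nagaev argument.
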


The annealed bound in Theorem~\ref{thm:be}(i) matches (up to the logarithmic correction) the bound obtained in \cite{Mo12} for the random conductance model, another model of a random walk in random environment, which has some strong connections with the BTM. More precisely, for the random conductance model with uniformly elliptic i.i.d.\ conductances, an annealed Berry-Esseen theorem as in Theorem~\ref{thm:be}(i) was proven in \cite{Mo12} for arbitrary dimension $d\geq 1$ with rate $t^{-1/10}$ (plus logarithmic corrections for $d=2$), and with rate $t^{-1/5}$ for $d\geq 3$ (with some logarithmic corrections for $d=3$). In \cite{AN19}, this was extended to some quenched and annealed bounds in $d\geq 3$ for unbounded and correlated conductances satisfying strong moment conditions and a quantified ergodicity assumption in the form of a spectral gap estimate.

Given that the random walk $(X_t)_{t\geq 0}$ is a martingale, our approach to proving Theorem~\ref{thm:be} is based on a Berry-Esseen-type estimate for martingales of \cite{Ha88} (see Theorem~\ref{thm:be_hb} below). It requires quantification of the speed of convergence of
\[E_0^\tau \left[\Big| \frac{\langle X \rangle_t} t -\sigma^2 \Big|^2 \right]\]
towards zero, where $\langle X \rangle$ denotes the quadratic variation process of the martingale $X$, see Proposition~\ref{prop:estV} below. To this end, we exploit the fact that $X$ is a time-change of the simple random walk on $\bbZ$. Then it suffices to show similar quantitative estimates on the additive functional governing the time-change, which is a continuous time version of the random walk in random scenery studied in \cite{DF19}.

\begin{remark}
The exponents  in Theorem~\ref{thm:be} are non-optimal. On the one hand, this is due to our non-optimal (quenched) estimate for the random walk in random scenery of Proposition~\ref{prop4-3}. On the other hand, an additional factor of  $1/5$ in the exponent arises from the application of the general Berry-Esseen theorem for martingales (see Theorem~\ref{thm:be_hb} below with the choice $n=2$). Recall that the jumps of the martingale $X$ are obviously bounded. In such a situation the results in \cite{Mo12a} show that the decay rate given by $E_0^\tau [|\frac{\langle X \rangle_t} t -\sigma^2 |^2]^{1/5}$ is optimal (for the choice $n=2$ in Theorem~\ref{thm:be_hb}). Thus, the only possibility to improve the exponents (within this approach) is to apply Theorem~\ref{thm:be_hb} for larger values of $n$, which requires control on higher moments of $\big| \frac{\langle X \rangle_t} t -\sigma^2 \big|$ (cf.\ the discussions of \cite[page 6]{Mo12} and \cite[Remark~1.12]{AN19}). Note that the rate $t^{-1/2}$ is the best possible one, since it is known to be the convergence rate for the simple random walk.
\end{remark}

By combining the Berry-Esseen estimates with suitable volume estimates we derive the following quantitative version of the quenched local limit theorem.

\begin{theorem} [Quantitative local limit theorem]\label{thm:QLCLT}
Suppose that $\alpha>2$. For any $\theta\in (0, \frac{\alpha-2}{10(2\alpha-1)})$, for all $K>0$ and $0<T_1\leq T_2$,
\[\lim_{n\to  \infty} n^{2\theta/3} \sup_{|x|\leq K} \sup_{t\in[T_1,T_2]}\bigg|\mean[\tau_0] \, n \, p^\tau_{\mean[\tau_0]n^2t}(0,\lfloor nx \rfloor)-\frac{1}{\sqrt{2\pi t}}e^{-\frac{x^2}{2 t}} \bigg| =0,  \qquad \mathbf{P}\text{-a.s.}\]
\end{theorem}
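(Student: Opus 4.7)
The approach is to combine the quenched Berry--Esseen bound of Theorem~\ref{thm:be}(ii) with a window-averaging argument, trading off three error contributions: the BE rate, the concentration of the trap sum $\sum\tau_y$ on the window, and the spatial regularity of the quenched heat kernel. Fix $\theta'\in(\theta,\frac{\alpha-2}{10(2\alpha-1)})$, a small $\eta>0$, and set $m_n\ldef\lfloor n^{1-4\theta/3-\eta}\rfloor$, $T_n\ldef\mean[\tau_0]\,n^2 t$, and $B_n(x)\ldef(\lfloor nx\rfloor-m_n,\lfloor nx\rfloor+m_n]\cap\bbZ$. Since $\sigma\sqrt{T_n}=n\sqrt t$, applying Theorem~\ref{thm:be}(ii) at the endpoints of $B_n(x)$ and Taylor-expanding $\Phi$ to first order gives, $\mathbf P$-a.s.\ and uniformly in $|x|\le K$, $t\in[T_1,T_2]$,
\[
  P_0^\tau\bigl(X_{T_n}\in B_n(x)\bigr) \;=\; \frac{2m_n}{n}\,q(t,x) + O\bigl(m_n^3/n^3+1/n+n^{-2\theta'}\bigr),
\]
where $q(t,x)\ldef(2\pi t)^{-1/2}e^{-x^2/(2t)}$.

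Using $P_0^\tau(X_{T_n}=y)=\tau_y\,p_{T_n}^\tau(0,y)$, the same interval probability equals $\sum_{y\in B_n(x)}\tau_y\,p_{T_n}^\tau(0,y)$, which I split as $p_{T_n}^\tau(0,\lfloor nx\rfloor)\sum_{y\in B_n(x)}\tau_y + R_n$ with $|R_n|\le\bigl(\sum_{y\in B_n(x)}\tau_y\bigr)\max_{y\in B_n(x)}|p_{T_n}^\tau(0,y)-p_{T_n}^\tau(0,\lfloor nx\rfloor)|$. The trap-sum concentration is handled by a Fuk--Nagaev inequality for i.i.d.\ sums with polynomial tails (exploiting $\alpha>2$) together with a union bound over the $O(n)$ distinct window centres, yielding $\sum_{y\in B_n(x)}\tau_y=2m_n\mean[\tau_0]\bigl(1+o(n^{-2\theta/3})\bigr)$ uniformly in $|x|\le K$; the constraint $\theta<\frac{\alpha-2}{10(2\alpha-1)}$ is exactly what ensures the heavy-tail contribution can be absorbed into this error.

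The main obstacle is the control of $R_n$, for which a quantitative spatial H\"older estimate for the quenched heat kernel is needed. Specifically, one requires a bound of the form
\[
  \max_{y\in B_n(x)}\bigl|p_{T_n}^\tau(0,y) - p_{T_n}^\tau(0,\lfloor nx\rfloor)\bigr| \;\le\; \frac{C}{n}\!\left(\frac{m_n}{n}\right)^{\!1/2},
\]
valid $\mathbf P$-a.s.\ for large $n$, uniformly in $|x|\le K$ and $t\in[T_1,T_2]$. Such a bound should be derivable from the Gaussian upper and lower heat-kernel bounds proved elsewhere in the paper via a parabolic Moser/De~Giorgi--Nash iteration, with the volume growth estimates of the preceding sections as the key input. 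The H\"older exponent $1/2$ is precisely what encodes the factor of $3$ in the target rate: balancing $(m_n/n)^{1/2}$ against the divided BE error $n^{1-2\theta'}/m_n$ forces $m_n\asymp n^{1-4\theta'/3}$ and yields rate $n^{-2\theta'/3}$.

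Assembling everything, solving the identity above for $p_{T_n}^\tau(0,\lfloor nx\rfloor)$ and inserting the trap-sum, BE, and H\"older estimates gives
\[
  n\mean[\tau_0]\,p_{T_n}^\tau(0,\lfloor nx\rfloor)-q(t,x) \;=\; O\bigl((m_n/n)^{1/2}\bigr)+O\bigl(n^{1-2\theta'}/m_n\bigr)+o(n^{-2\theta/3}).
\]
With $m_n\asymp n^{1-4\theta/3-\eta}$ and $\theta'$ chosen slightly larger than $\theta$, both of the $O$-terms become $O(n^{-2\theta/3-\eta/2})$. Multiplying by $n^{2\theta/3}$ and taking the supremum over $|x|\le K$, $t\in[T_1,T_2]$ then yields the claimed $\mathbf P$-a.s.\ convergence to zero.
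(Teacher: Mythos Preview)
Your approach is essentially the same as the paper's: decompose the difference into the Berry--Esseen error, the trap-sum concentration on the window, the spatial H\"older regularity of $p^\tau$, and the regularity of the Gaussian kernel, then balance the window size. The paper packages this as a general statement (Theorem~\ref{thm:quant_LCLT*}) under an abstract Assumption~\ref{assumpLCLT}, and then verifies the four inputs for the one-dimensional BTM; your argument is the direct, unabstracted version of the same computation, and the optimisation over $m_n$ you describe reproduces exactly the exponent $2\theta/3=d_w\theta/(1+2d_f)$ that falls out of Theorem~\ref{thm:quant_LCLT*}.

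Two points where your write-up diverges from the paper and creates unnecessary work:

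\emph{The H\"older estimate.} You state the bound $|p^\tau_{T_n}(0,y)-p^\tau_{T_n}(0,\lfloor nx\rfloor)|\le Cn^{-1}(m_n/n)^{1/2}$ as something that ``should be derivable'' from Gaussian two-sided bounds via Moser/De~Giorgi--Nash iteration, and you call this the ``main obstacle''. In fact this bound is already established in the paper as \eqref{eq:holder-HK}, by a far more elementary route: one Cauchy--Schwarz step gives $|p^\tau_t(0,x)-p^\tau_t(0,y)|^2\le |x-y|\,\sum_z(p^\tau_t(0,z)-p^\tau_t(0,z+1))^2$, and this Dirichlet energy is bounded by $t^{-1}p^\tau_t(0,0)\le Ct^{-3/2}$ via \eqref{odu}. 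No parabolic iteration is needed. Since you leave this step unproved, it is the one genuine gap in your proposal, but it is easily filled.

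\emph{The trap-sum concentration.} You invoke a Fuk--Nagaev inequality with a union bound over $O(n)$ sliding window centres, and assert that the constraint $\theta<\frac{\alpha-2}{10(2\alpha-1)}$ is ``exactly'' what makes the heavy-tail contribution summable. That is not correct: the constraint on $\theta$ comes entirely from the Berry--Esseen rate in Theorem~\ref{thm:be}(ii); the volume step only needs the much weaker condition $\theta<1/4$. The paper handles this step with a \emph{fixed} partition into $O(n/m_n)$ disjoint intervals (the $\mathcal{I}_N$), so that Chebyshev (using only $\mathrm{Var}(\tau_0)<\infty$) plus a union bound and Borel--Cantelli already suffices. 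Your sliding-window version with $O(n)$ centres and Fuk--Nagaev can be made to work, but it is more delicate than necessary, and your stated reason for why it works misidentifies where the binding constraint lies.
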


Again it would be desirable to identify and prove optimal rates of convergence in the local limit theorem. In the context of the random conductance model, optimal quantitative homogenization results for heat kernels and Green functions can be established by using techniques from quantitative stochastic homogenization, see \cite[Chapters~8--9]{AKM19} for details in the uniformly elliptic case with finite range dependencies. This technique has been adapted to random conductance models on i.i.d.\ percolation clusters in \cite{DaGu}, and it is expected that it also applies to other degenerate models.

\begin{remark}
Whilst we do not pursue it here, it would be of further interest to prove an annealed version of Theorem~\ref{thm:QLCLT}. Furthermore, we know that a quenched local limit theorem holds for all $\alpha>1$, see Theorem~\ref{thm:QHK}(iii), and so it is also relevant to ask about the speed of convergence when $\alpha\in (1,2]$. Since we use various second moment estimates, this is beyond the techniques of this article.
\end{remark}

\subsection{Organisation of the article} The remainder of the article is organised as follows. In Section~\ref{sec:vol}, we summarise classical results concerning sums of i.i.d.\  heavy-tailed random variables, which will shed light on the volume growth of the invariant measure in the BTM. (Related tail bounds are presented in Appendix~\ref{appa}.) In Section~\ref{sec:hke}, we prove the heat kernel estimates of Theorems~\ref{thm:QHK} and \ref{thm:AHK}. Finally, in Section~\ref{sec:qh}, we establish the quantitative homogenization results of Theorems~\ref{thm:be} and \ref{thm:QLCLT}.

\section{Law of iterated logarithm in the $\alpha$-stable regime}\label{sec:vol}

The goal of this section is to summarise the asymptotic fluctuations of i.i.d.\ sums with distributional tail as at \eqref{tautail}. In particular, from classical results, we will deduce the following theorem. Note that, for our heat kernel estimates, we only need the detailed statements of parts (i) and (ii), but we include the remaining cases for completeness. (It would potentially be interesting to determine whether parts (iii)-(v) could be used to understand the second order behaviour of the heat kernel in the relevant cases.) We use the notation $x\vee y$ and $x\wedge y$ for $\max\{x,y\}$ and $\min\{x,y\}$, respectively.

\begin{theorem} \label{thm:LIL}
Let $X_1, X_2, \ldots$ be i.i.d.\ on some probability space $(\Omega, \cF, \prob)$ with distribution function $F$ of the form  $F(x)=(1- c_F x^{-\alpha}) \vee 0$ with $c_F\in (0,\infty)$ and $\alpha \in (0,\infty)$. Write $S_n\ldef \sum_{i=1}^n X_i$. Then the following hold.
\begin{enumerate}
\item[(i)] If $\alpha\in (0,1)$, then $\prob$-a.s.
\begin{align} \label{eq:limsup_alpha01}
\limsup_{n\to\infty} \frac{S_n}{ n^{1/\alpha}  (\log n)^{1/\alpha} (\log\log n)^{\theta}}
=
\begin{cases}
0,  & \text{if $\theta>1/\alpha$},\\
\infty, & \text{if $\theta\leq 1/\alpha$,}
\end{cases}
\end{align}
and
\begin{align} \label{eq:liminf_alpha01}
\liminf_{n\to\infty} \frac{S_n}{n^{1/\alpha} (\log\log n)^{1-1/\alpha}}
=   c_F^{1/\alpha} \frac \alpha {1-\alpha}  \Gamma(2-\alpha)^{1/\alpha}.
\end{align}
\item[(ii)] If $\alpha=1$, then $\prob$-a.s.
\begin{align} \label{eq:limsup_alpha1}
\limsup_{n\to\infty} \frac{S_n}{ n  (\log n) (\log\log n)^{\theta}}
=
\begin{cases}
0,  & \text{if $\theta>1$},\\
\infty, & \text{if $\theta\leq 1$,}
\end{cases}
\end{align}
and
\begin{align} \label{eq:liminf_alpha1}
\liminf_{n\to \infty} \frac{S_n}{n\log n} =c_F.
\end{align}
\item[(iii)] If $\alpha\in(1,2)$, then $\prob$-a.s.
\begin{align} \label{eq:limsup_alpha12}
\limsup_{n\to\infty} \frac{S_n-n \mean[X_1]}{ n^{1/\alpha}  (\log n)^{1/\alpha} (\log\log n)^{\theta}}
=
\begin{cases}
0,  & \text{if $\theta>1/\alpha$},\\
\infty, & \text{if $\theta\leq 1/\alpha$,}
\end{cases}
\end{align}
and
\begin{align} \label{eq:liminf_alpha12}
\liminf_{n\to\infty} \frac{S_n- n \mean[X_1]}{n^{1/\alpha} (\log\log n)^{1-1/\alpha}} 
= c_F^{1/\alpha}\frac\alpha{1-\alpha}\big( \Gamma(2-\alpha)^{1/\alpha}-1\big).
\end{align}
\item[(iv)] If $\alpha=2$, then $\prob$-a.s.
\begin{align} \label{eq:limsup_alpha2}
\limsup_{n\to\infty} \frac{S_n- n \mean[X_1]}{\sqrt{ n  (\log n) (\log\log n)^{\theta}}}
=
\begin{cases}
0,  & \text{if $\theta>1$},\\
\infty, & \text{if $\theta\leq 1$,}
\end{cases}
\end{align}
and
\begin{align} \label{eq:liminf_alpha2}
\liminf_{n\to \infty} \frac{S_n- n \mean[X_1]}{\sqrt{n(\log n)(\log\log n)}} =-\sqrt{2c_F}.
\end{align}
\item[(v)] If $\alpha \in (2,\infty)$, then $\prob$-a.s.
\[\limsup_{n\to \infty} \frac{S_n- n \mean[X_1]}{\sqrt{n \log\log n}} = \sqrt{2 \mathrm{Var}(X_1)}, \quad \liminf_{n\to \infty} \frac{S_n- n \mean[X_1]}{\sqrt{n \log\log n}} = -\sqrt{2 \mathrm{Var}(X_1)}.\]
\end{enumerate}
\end{theorem}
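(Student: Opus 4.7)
The theorem collects five classical LIL statements for i.i.d.\ Pareto-tailed sums, and my plan is to separate the proof into (a) the limsup halves of (i)--(iv), which are driven by the largest summand and follow from Borel--Cantelli combined with a truncation, (b) the liminf halves of (i)--(iv), which are instances of Chung-type (for (i), (ii)) and Mijnheer-type (for (iii), (iv)) lower LILs for stable limits, and (c) part (v), which is immediate from Hartman--Wintner since $\mathrm{Var}(X_1)<\infty$.

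\textbf{Limsup halves.} Setting $M_n=\max_{i\leq n}X_i$ and $a_n=n^{1/\alpha}(\log n)^{1/\alpha}(\log\log n)^\theta$ (with the obvious adjustment at $\alpha=1$), the tail \eqref{tautail} gives
\[\prob(X_1>\lambda a_n)=c_F\lambda^{-\alpha}\bigl(n\log n(\log\log n)^{\alpha\theta}\bigr)^{-1},\]
and the independent-events Borel--Cantelli lemma yields $M_n>\lambda a_n$ infinitely often (resp.\ only finitely often) according as $\theta\leq 1/\alpha$ or $\theta>1/\alpha$, for every $\lambda>0$. Since $X_i\geq c_F^{1/\alpha}>0$ we have $S_n\geq M_n$, which handles the divergent side of (i), (ii), and (after absorbing $n\mean[X_1]$ using an auxiliary upper LIL bound on the bulk) also of (iii) and (iv). For the convergent side, a truncation at level $a_n$ shows the truncated sum $\sum X_i\indicator\{X_i\leq a_n\}$ has mean $n\mean[X_1]+O(na_n^{1-\alpha})$ (with a logarithmic variant when $\alpha=1$) and variance $O(na_n^{2-\alpha})$ (resp.\ $O(na_n)$ at $\alpha=1$), both $o(a_n)$ in the relevant range by direct computation from \eqref{tautail}; combined with the BC1 fact that only finitely many $X_i$ exceed $a_i$, this gives $S_n-n\mean[X_1]=o(a_n)$ a.s.

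\textbf{Liminf halves and main obstacle.} For (i), $n^{-1/\alpha}S_n$ converges to a positive $\alpha$-stable variable $Y$ with Laplace transform $\mean[\me^{-sY}]=\exp(-c_F\Gamma(1-\alpha)s^\alpha)$, and I would invoke the classical Chung-type LIL for the associated $\alpha$-stable subordinator to obtain the stated liminf, using the identity $\Gamma(1-\alpha)=\Gamma(2-\alpha)/(1-\alpha)$ to rearrange the Chung constant into the form in \eqref{eq:liminf_alpha01}. The passage from the subordinator LIL to the discrete-sum LIL can be effected via a Skorohod embedding or by the Erd\H{o}s--Feller--Kolmogorov integral test on exponentially spaced subsequences together with a maximal inequality. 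Part (ii) is the Cauchy analogue, and parts (iii) and (iv) follow from Mijnheer's lower LIL for sums in the domain of attraction of a totally right-skewed $\alpha$-stable law with $\alpha\in(1,2]$, where the sharp constant is read off from Cram\'er-type asymptotics for the left tail of the limit. The principal technical challenge lies precisely in identifying the sharp constants in the forms stated in \eqref{eq:liminf_alpha01}--\eqref{eq:liminf_alpha2}: one must carry out the one-sided Cram\'er/Chung computations for the correct (totally skewed) stable laws and then perform the algebraic rearrangements ($\Gamma(1-\alpha)$ versus $\Gamma(2-\alpha)/(1-\alpha)$, etc.) to cast the answer in the prescribed form. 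The limsup halves and part (v) are then routine bookkeeping once the correct normalizations have been pinned down.
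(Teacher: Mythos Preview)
Your plan is viable in outline but takes a substantially more hands-on route than the paper. The paper does not reprove these LIL statements from first principles; it invokes two off-the-shelf results that package all the hard work. For the limsup halves in the range $\alpha\in(0,2)$ it cites Pruitt's criterion \cite[Theorem~6.1]{Pr81}, which states that $\limsup_n (S_n-m_n)/\beta_n$ equals $0$ or $\infty$ according as $\sum_n\prob[X_1>\beta_n]$ converges or diverges (here $m_n$ is the median of $S_n$, whose asymptotics are read off from the stable limit in a short lemma). For the liminf halves it cites Mason \cite[Theorem~1 and Lemma~4]{Ma94}, which gives the liminf of $(S_n-n\mu(b_n))/(a_n\sqrt{\log\log n})$ directly for discrete sums, with the sharp constant already tabulated in terms of the truncated mean and variance; the constants in \eqref{eq:liminf_alpha01}--\eqref{eq:liminf_alpha12} then drop out of a short computation. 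The $\alpha=2$ limsup is handled by citing Feller and Einmahl--Li, and part~(v) is Hartman--Wintner, as you say. Your Borel--Cantelli plus truncation argument is essentially a reproof of Pruitt in this special case, and your subordinator/Mijnheer route to the liminf would require a coupling or embedding step to pass from the continuous process to discrete sums, plus precisely the constant-matching algebra you flag as the main obstacle---all of which Mason has already done.

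Two steps in your limsup sketch need more care than you indicate. First, the variance bound on the truncated sum only gives convergence in probability; the almost-sure upgrade needs a maximal inequality along a geometric subsequence. Second, and more seriously, for the \emph{divergent} side when $\alpha\in(1,2]$ the inequality $S_n\geq M_n$ does not suffice, because you are centering by $n\mean[X_1]$: you must also control $S_{n-1}-(n-1)\mean[X_1]$ from below on the events $\{X_n>\lambda a_n\}$, and your ``auxiliary upper LIL bound on the bulk'' is in the wrong direction (it bounds the bulk from above, not below). One fix is to use tightness of $n^{-1/\alpha}(S_n-n\mean[X_1])$ together with independence of $X_n$ from $S_{n-1}$; Pruitt's formulation sidesteps the issue entirely by centering at the median rather than the mean.
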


Towards proving Theorem~\ref{thm:LIL}, we first analyse the median of $S_n$ for large $n$ in the case when $\alpha<2$.

\begin{lemma} \label{lem:median}
For $n\geq 1$, let  $m_n$ denote the median of $S_n$. Then, as $n\to \infty$,
\begin{align*}
m_n =
\begin{cases}
\bar m_\alpha  n^{1/\alpha}+o( n^{1/\alpha}), & \text{if $\alpha \in (0,1)$}, \\
\bar m_\alpha  n \log n+o(n\log n), & \text{if $\alpha=1$}, \\
n \mean[X_1] + \bar m_\alpha  n^{1/\alpha}+ o(n^{1/\alpha}), & \text{if $\alpha \in (1,2)$},
\end{cases}
\end{align*}
with  $\bar m_\alpha$ denoting the median of a totally asymmetric $\alpha$-stable law $S_1^{(\alpha)}$ satisfying
\begin{equation}\label{slaplace}
\mean\left[e^{-\lambda S_1^{(\alpha)}}\right]  =e^{-C\lambda^\alpha}
\end{equation}
for some appropriately chosen $C\in (0,\infty)$ (depending on $c_F$ and $\alpha$).
\end{lemma}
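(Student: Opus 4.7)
My plan is to deduce each of the three asymptotics from classical limit theorems for sums of heavy-tailed i.i.d.\ random variables, combined with the elementary observation that convergence in distribution to a law with a continuous, strictly increasing distribution function forces convergence of the medians. Specifically, if $Y_n \Rightarrow Y$ and $F_Y$ is continuous and strictly increasing at its median $\bar m$, then for any $\varepsilon>0$ the strict inequalities $F_Y(\bar m -\varepsilon)<\tfrac12<F_Y(\bar m+\varepsilon)$ combined with $F_{Y_n}\to F_Y$ at the continuity points $\bar m\pm\varepsilon$ force every median of $Y_n$ eventually to lie in $[\bar m -\varepsilon,\bar m+\varepsilon]$; the degenerate case of convergence in probability to a constant is handled identically.

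For $\alpha\in(0,1)$, I would first verify the small-$\lambda$ Laplace asymptotic $\mean[e^{-\lambda X_1}] = 1 - C\lambda^\alpha + o(\lambda^\alpha)$ as $\lambda\downarrow 0$, with explicit $C = c_F\Gamma(1-\alpha)$, by a direct computation against the tail at \eqref{tautail}. Raising to the $n$-th power and evaluating at $\lambda \mapsto \lambda n^{-1/\alpha}$ then yields $\mean[\exp(-\lambda n^{-1/\alpha} S_n)]\to e^{-C\lambda^\alpha}$, i.e.\ $n^{-1/\alpha} S_n \Rightarrow S_1^{(\alpha)}$. The positive $\alpha$-stable limit has a strictly positive continuous density on $(0,\infty)$, so the median-convergence principle above gives $m_n/n^{1/\alpha}\to \bar m_\alpha$. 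For $\alpha\in(1,2)$, $\mean[X_1]<\infty$, and the classical generalized central limit theorem applied to the centered variables gives $(S_n - n\mean[X_1])/n^{1/\alpha}\Rightarrow S_1^{(\alpha)}$, where $S_1^{(\alpha)}$ is the totally asymmetric $\alpha$-stable law of \eqref{slaplace} with continuous positive density on $\bbR$; the same principle delivers $m_n = n\mean[X_1] + \bar m_\alpha n^{1/\alpha} + o(n^{1/\alpha})$.

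The case $\alpha=1$ is structurally different because \eqref{slaplace} with $\alpha=1$ is the Laplace transform of a point mass $\delta_C$, so $\bar m_1 = C$ and what is being asserted is really a weak law of large numbers. Here a short computation gives the logarithmic asymptotic $1 - \mean[e^{-\lambda X_1}]\sim c_F\lambda\log(1/\lambda)$ as $\lambda\downarrow 0$; substituting $\lambda \mapsto \mu/(n\log n)$ and raising to the $n$-th power gives $\mean[\exp(-\mu S_n/(n\log n))]\to e^{-c_F\mu}$, and hence $S_n/(n\log n)\to c_F$ in probability. Since the medians of a sequence of random variables converging in probability to a constant must converge to that constant, I conclude $m_n = c_F \, n\log n + o(n\log n)$, with $\bar m_1 = c_F = C$ consistent with the normalization in \eqref{slaplace}.

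The main (very minor) obstacle is bookkeeping: one must identify the constant $C$ in \eqref{slaplace} consistently across regimes, so that the Laplace asymptotics of $X_1$ at $\lambda\downarrow 0$ agree with the Laplace transform of the claimed limit law. All three computations are classical, and no step presents any genuine conceptual difficulty.
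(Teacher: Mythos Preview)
Your proposal is correct and follows essentially the same route as the paper: both arguments invoke the classical convergence in distribution of the rescaled sums to the appropriate stable limit and then use the elementary fact that, for a limit law with continuous strictly increasing distribution function at its median, the medians of the approximating laws must converge. The only difference is cosmetic: the paper simply cites the relevant stable limit theorems, whereas you sketch a direct verification via Laplace transforms and make explicit that for $\alpha=1$ the ``stable'' limit in \eqref{slaplace} is the point mass $\delta_C$, so that the statement reduces to a weak law of large numbers.
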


\begin{proof}
Recall that for $\alpha \in (1,2)$,
\begin{align*}
\frac{S_n - n \mean[X_1]}{n^{1/\alpha}} \overset{d}{\longrightarrow} S_1^{(\alpha)} \qquad \text{as $n\to \infty$,}
\end{align*}
where the law of $S_1^{(\alpha)}$ is characterized by \eqref{slaplace}, see \cite[Chapter~7]{GK68} or \cite[Chapter~8]{BGT}, for example. In particular, for any $\varepsilon>0$,
\begin{align*}
\prob\big[ S_n \leq n \mean[X_1] + (\bar m_\alpha - \varepsilon) n^{1/\alpha} \big] &= \prob \Big[ \frac{S_n - n \mean[X_1]}{n^{1/\alpha}} \leq \bar m_\alpha - \varepsilon  \Big] \\
&
\underset{n\to \infty}{\longrightarrow} \prob\big[ S_1^{(\alpha)}  \leq \bar m_\alpha - \varepsilon \big] < \frac  1 2,
\end{align*}
and similarly
\begin{align*}
\prob\big[ S_n \geq n \mean[X_1] + (\bar m_\alpha + \varepsilon) n^{1/\alpha} \big] &= \prob \Big[ \frac{S_n - n \mean[X_1]}{n^{1/\alpha}} \geq \bar m_\alpha + \varepsilon  \Big] \\
&\underset{n\to \infty}{\longrightarrow} \prob\big[ S_1^{(\alpha)}  \geq \bar m_\alpha + \varepsilon \big] <\frac  1 2.
\end{align*}
This gives the result for $\alpha \in (1,2)$. In the other cases, the claim follows by the same argument since $S_n/n^{1/\alpha} \overset{d}{\longrightarrow} S_1^{(\alpha)}$ if $\alpha\in (0,1)$ and  $S_n/(n \log n) \overset{d}{\longrightarrow} S_1^{(1)}$ if $\alpha=1$.
\end{proof}

\begin{proof}[Proof of Theorem~\ref{thm:LIL}]
{\bf Case 1: $\alpha \in (0,2)$}. \emph{Upper bounds.} We first summarize some consequences of the results in \cite[Theorem~6.1]{Pr81}. Using the notation therein, we have in our setting
\[G_+(u) \ldef \prob[X_1 >u] \asymp u^{-\alpha}, \qquad G(u)= \prob[|X_1| >u] \asymp u^{-\alpha},\]
and
\[K(u) \ldef u^{-2} \int_{|y| \leq u} y^2 \, dF(y) = c_F  u^{-2} \int_1^u \alpha y^{1-\alpha} \, dy\asymp u^{-\alpha}.\]
In particular,
\[\liminf_{u\to \infty} \frac{G_+(u)}{G(u)+K(u)}>0.\]
Hence, by \cite[Theorem~6.1]{Pr81} we have $\prob$-a.s.,
\begin{align*}
\limsup_{n\to\infty} \frac{S_n-m_n}{\beta_n}
=\begin{cases}
0,  & \text{if } \sum_n \prob[X_1>\beta_n]<\infty, \\
\infty, & \text{if } \sum_n \prob[X_1>\beta_n]=\infty,
\end{cases}
\end{align*}
where we again write $m_n$ for the median of $S_n$. Now, choosing
\[\beta_n= n^{1/\alpha}  (\log n)^{1/\alpha} (\log\log n)^{\theta},\]
so that
\begin{align*}
 \sum_n \prob[X_1>\beta_n] =c_F \sum_n n^{-1} (\log n)^{-1} (\log \log n)^{-\alpha \theta}
 \begin{cases}
 <\infty & \text{if $\theta>1/\alpha$}, \\
  =\infty & \text{if $\theta \leq 1/\alpha$,}
 \end{cases}
\end{align*}
we obtain that, for $\alpha\in (0,2)$,
\begin{align*}
\limsup_{n\to\infty} \frac{S_n-m_n}{ n^{1/\alpha}  (\log n)^{1/\alpha} (\log\log n)^{\theta}}
=\begin{cases}
0,  & \text{if $\theta>1/\alpha$},\\
\infty, & \text{if $\theta\leq 1/\alpha$.}
\end{cases}
\end{align*}
Thus, from Lemma~\ref{lem:median} we obtain \eqref{eq:limsup_alpha01}, \eqref{eq:limsup_alpha1} and \eqref{eq:limsup_alpha12}.

\emph{Lower bounds.}
Let $Q$ denote the quantile function associated with  $F(x)=(1- c_F x^{-\alpha}) \vee 0$, that is
\[Q(s) \ldef \inf\big\{ x: F(x)\geq s \big\} = c_F^{1/\alpha} \big(1-s\big)^{-1/\alpha}, \qquad 0 < s <1,\]
set $Q(0)=Q(0+)$, and define the truncated mean
\begin{align*}
\mu(s) &\ldef \int_0^{1-s} Q(u) \, du =  c_F^{1/\alpha} \int_0^{1-s} \big(1-u \big)^{-1/\alpha} \, du =  c_F^{1/\alpha} \int_s^1
 u^{-1/\alpha} \, du  \\
& =
 \begin{cases}
c_F^{1/\alpha} \frac{\alpha}{1-\alpha} \big[s^{1-1/\alpha} -1 \big], & \text{if $\alpha \in (0,2)\backslash\{1\}$}, \\
 - c_F \log(s),   & \text{if $\alpha =1$}.
 \end{cases}
 \end{align*}
For $\alpha \in (1,2)$, note that $\mu(s) \rightarrow\mean[X_1]$ as $s\to 0$. The truncated variance is given by
\begin{align*}
\sigma^2(s)&\ldef s Q(1-s)^2 + \int_0^{1-s} Q^2(u) \, du - \big(sQ(1-s)+\mu(s) \big)^2 \\
&=c_F^{2/\alpha} \Big( s^{1-2/\alpha} + \frac{\alpha}{2-\alpha} s^{1-2/\alpha} - \big( s^{1-1/\alpha} +\mu(s)\big)^2 \Big) \\
& \sim c_F^{2/\alpha}\frac{2}{2-\alpha} s^{1-2/\alpha}.
\end{align*}
Now setting
\begin{align*}
b_n \ldef n^{-1} \log \log n, \qquad a_n\ldef n^{1/2} \sigma(b_n),
\end{align*}
we have by \cite[Theorem~1]{Ma94} that
\begin{align} \label{eq:mason}
\liminf_{n\to\infty} \frac{S_n-n \mu(b_n)}{a_n (\log\log n)^{1/2}} = K_\alpha, \qquad \text{$\prob$-a.s.}
\end{align}
where the constant $K_\alpha \in [-\sqrt{2}, 0]$ is given by
\begin{align*}
K_\alpha=
\begin{cases}
\big( \Gamma(2-\alpha)^{1/\alpha}-1\big) \alpha (1-\alpha)^{-1} \big((2-\alpha)/2\big)^{1/2}, &\text{if $\alpha \in (0,2)\backslash\{1\}$}, \\
-\gamma/\sqrt{2},  & \text{if $\alpha=1$,}
\end{cases}
\end{align*}
with $\gamma$ being the Euler constant, see \cite[Lemma~4]{Ma94}.

Now for $\alpha\in (0,1)$ we have
\begin{align*}
\lim_{n\to\infty} \frac{n \mu(b_n)}{a_n (\log\log n)^{1/2}} =
 \frac \alpha{1-\alpha} 
 \Big(\frac{2-\alpha}{2} \Big)^{1/2} \rdef C_\alpha,
\end{align*}
so that \eqref{eq:mason} can be rewritten as
\begin{align*}
\liminf_{n\to\infty} \frac{S_n}{a_n (\log\log n)^{1/2}} = C_\alpha+ K_\alpha, \qquad \text{$\prob$-a.s.}
\end{align*}
with
\begin{align*}
 C_\alpha+ K_\alpha= 
\frac \alpha{1-\alpha} \Big(\frac{2-\alpha}{2} \Big)^{1/2} \Gamma(2-\alpha)^{1/\alpha} >0.
\end{align*}
This is equivalent to \eqref{eq:liminf_alpha01}.

If $\alpha =1$,  then by \eqref{eq:mason},
\begin{align*}
\liminf_{n\to\infty} \frac{S_n-c_Fn \log n+c_Fn\log\log\log n}{c_F\sqrt{2} n}=
\liminf_{n\to\infty} \frac{S_n-n \mu(b_n)}{a_n (\log\log n)^{1/2}} = - \gamma/ \sqrt{2},
\end{align*}
and by dividing both sides by $\log n$ we derive \eqref{eq:liminf_alpha1}.

Finally, if $\alpha\in (1,2)$, we  have $\mu(b_n) \sim \mean[X_1]$, so that \eqref{eq:mason} becomes \eqref{eq:liminf_alpha12}.

\noindent
{\bf Case 2: $\alpha = 2$.} To check the statement at \eqref{eq:liminf_alpha2}, we again apply the results of \cite{Ma94}. In particular, defining notation as in the previous part of the proof, we have in this case that $\sigma^2(s)\sim -c_F\log(s)$ as $s\rightarrow 0$. Thus
$a_n\sim (c_F n \log(n))^{1/2}$, and so \cite[Theorem~1 and Lemma~4]{Ma94} yield a statement analogous to \eqref{eq:mason} with $K_2=-\sqrt{2}$. This is equivalent to the desired result.

For \eqref{eq:limsup_alpha2}, we first consider the problem for the symmetric distribution given by $\mathbf{P}(|X_1|\geq x)=x^{-2}$ for $x\geq 1$. In this case, it is explained in \cite[Section~8]{Fell} that, $\prob$-a.s.,
\[\limsup_{n\rightarrow\infty}\frac{S_n}{\sqrt{n(\log n)(\log\log n)}}=\infty.\]
From this (and the conclusion at \eqref{eq:liminf_alpha2}), the corresponding result for the case of positive $X_1$ readily follows. (One may alternatively deduce the result from \cite[Theorem~7.5]{Pr81}, which contains a convenient restatement of results from \cite{Kl1,Kl2}.) Moreover, for the case of positive $X_1$, it follows from \cite[Theorem~1]{EL} that, for any $\varepsilon>0$, $\prob$-a.s.,
\[\limsup_{n\rightarrow\infty}\frac{S_n}{\sqrt{n(\log n)(\log\log n)^{1+\varepsilon}}}=0.\]

\noindent
{\bf Case 3: $\alpha > 2$.}
Since the random variables have a finite variance in this case, the result follows from the classical Hartman-Wintner law of the iterated logarithm \cite{HW}.
\end{proof}

\section{Heat kernel estimates}\label{sec:hke}

In this section, we aim to establish the heat kernel estimates of Theorems~\ref{thm:QHK} and \ref{thm:AHK}. These are proved using what are now standard techniques in the area, see \cite{BCK,BJKS,BK06,Croy,Kum,KumSF, KM}, for example. The approaches of these papers depend on establishing good volume growth and resistance bounds. In our setting, the resistance is particularly simple. Indeed, if we consider the state space $\mathbb{Z}$ to be an electrical network with unit resistors on each edge, the resistance metric simply coincides with the Euclidean metric. Moreover, the simple structure of the space means that, if we write $B(x,n):=\{y\in\mathbb{Z}:\:|x-y|<n\}$ and the effective resistance as $R$, then from the parallel law we obtain $R(x, B(x,n)^c)=\frac n 2$. (See \cite{Barbook} for a definition of the effective resistance and background on the connections between random walks and effective resistance.) Consequently, to establish reasonable heat kernel estimates, it remains to study the behaviour of the invariant measure for the Markov chain $X$. Recalling that a reversible measure for $X$ is given by $\mathbf{\tau}\ldef \sum_{x\in \bbZ} \tau_x \delta_x$, we define
\begin{align*}
V(x,n) \ldef \sum_{y=x-n}^{x+n} \tau_y, \qquad x\in \bbZ, \, n\in \bbN,
\end{align*}
and write $V(n)\ldef V(0,n)$. Clearly, the fluctuations of $V(n)$ for large $n$ can be immediately deduced from Theorem~\ref{thm:LIL}. In what follows, we explain how to transfer those volume asymptotics to heat kernel estimates. For the quenched result of Theorem~\ref{thm:QHK}, we will apply the following bounds.

\begin{prop}\label{prop:hke}
\begin{enumerate}
  \item[(i)] Let $x\in\mathbb{Z}$. If $V(x,r)\geq V_l(r)$ for all $r\geq 0$, where $V_l:[0,\infty)\rightarrow[0,\infty)$ is a continuous, non-decreasing function that satisfies the doubling property (i.e.\ there exists a $C<\infty$ such that $V_l(2r)\leq CV_l(r)$), then there exists a constant $c<\infty$ such that
      \[p_t^\tau(x,x)\leq\frac{ch_l^{-1}(t)}{t}\]
      for all $t>0$, where $h_l(r):= rV_l(r)$.
  \item[(ii)] For every $x\in\mathbb{Z}$ and $n\in\mathbb{N}$, it holds that
  \[p^\tau_{2nV(x,n)}(x,x)\leq \frac{2}{V(x,n-1)}.\]
  \item[(iii)] For every $x\in\mathbb{Z}$ and $n\in\mathbb{N}$, it holds that
  \[p^\tau_{\frac{n}{2}V(x,n)}(x,x)\geq \frac{V(x,n)^2}{16V(x,2n)^3}.\]
\end{enumerate}
\end{prop}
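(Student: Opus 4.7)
My plan is to apply the standard toolkit for resistance-form heat-kernel bounds (as developed in \cite{BCK, BJKS, BK06, Croy, Kum, KumSF, KM}). The key simplification in our one-dimensional setting is the exact identity $R(x,B(x,r)^c)=r/2$, coming from the parallel law for two unit-resistance paths of length $r$; as a result, all three bounds reduce to statements involving only the volume function $V(x,\cdot)$.

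For part~(i), I would invoke the textbook upper bound in the resistance-form framework: if $R(x,B(x,r)^c)\geq cr$ and $V(x,r)\geq V_l(r)$ with $V_l$ doubling, then $p_t^\tau(x,x)\leq C/V_l(\Psi^{-1}(t))$, where $\Psi(r)\ldef rV_l(r)=h_l(r)$. Since the defining identity $\Psi^{-1}(t)\,V_l(\Psi^{-1}(t))=t$ yields $1/V_l(\Psi^{-1}(t))=\Psi^{-1}(t)/t$, the right-hand side equals $Ch_l^{-1}(t)/t$. The underlying argument combines a Nash-type inequality derived from the variational characterization of resistance with a differential inequality for $\Phi(s)\ldef p_{2s}^\tau(x,x)=\|p_s^\tau(x,\cdot)\|_{L^2(\tau)}^2$, which satisfies $\Phi'(s)=-2\cE(p_s^\tau(x,\cdot),p_s^\tau(x,\cdot))$.

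Part~(ii) is a pointwise, sharp-constant version of (i). I would follow the same energy-estimate scheme, using two ingredients: (a) the expected-exit-time bound $E^\tau_x[T_{B(x,n-1)^c}]\leq R(x,B(x,n-1)^c)\,V(x,n-1)=((n-1)/2)\,V(x,n-1)$, which forces the chain to exit $B(x,n-1)$ with high probability by time $t\ldef 2nV(x,n)$; and (b) the variational inequality $f(x)^2\leq R(x,B(x,n-1)^c)\,\cE(f,f)$ for $f$ supported in $B(x,n-1)$ (the extremal characterization of the effective resistance), which leads to a Nash-type differential inequality for the Dirichlet heat kernel on $B(x,n-1)$. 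Integrating this inequality up to time $t/2$ and combining with (a) to bound the boundary contribution recovers $p_t^\tau(x,x)\leq 2/V(x,n-1)$.

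For part~(iii), the lower bound follows from the Cauchy--Schwarz inequality
\begin{align*}
P^\tau_x\big(X_{t/2}\in B(x,2n)\big)^2
&=\Big(\sum_{y\in B(x,2n)}\tau_y\,p_{t/2}^\tau(x,y)\Big)^2 \\
&\leq V(x,2n)\sum_{y\in B(x,2n)}\tau_y\,p_{t/2}^\tau(x,y)^2 \\
&\leq V(x,2n)\,p_t^\tau(x,x),
\end{align*}
applied with $t=nV(x,n)/2$; the claim then reduces to showing $P^\tau_x(X_{t/2}\in B(x,2n))\geq V(x,n)/(4V(x,2n))$, which I would establish via a Bass-type exit-probability estimate of the form $P^\tau_x(T_{B(x,r)^c}\leq s)\leq Cs/(rV(x,r))$, applied with $r=2n$ and $s=nV(x,n)/4$. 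This Bass-type bound in turn follows from a Paley--Zygmund argument for the exit time, using the lower bound on its mean ($\gtrsim rV(x,r)$, itself obtained from the Green's function estimate $G_{B(x,r)}(x,y)\asymp r$ for $y$ in the central portion of $B(x,r)$) together with a second-moment bound $E^\tau_x[T^2_{B(x,r)^c}]\lesssim (rV(x,r))^2$ coming from iterated Markov-type estimates. I expect the main obstacle to be tracking the precise constants in (ii): getting exactly $2/V(x,n-1)$ (rather than a generic $C/V(x,n)$) requires a careful implementation of the energy inequality on $B(x,n-1)$ and of the passage between the Dirichlet and full heat kernels.
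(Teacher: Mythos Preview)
Your outline for parts~(i) and~(ii) matches the paper's: both defer to the standard resistance-form machinery (\cite{BCK,Kum,Croy}) and give essentially no further detail, so there is nothing to compare.

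For part~(iii), however, your route diverges from the paper's and is more complicated than necessary. You propose to reach the key estimate $P^\tau_x(T_{B(x,2n)}>t)\geq V(x,n)/(4V(x,2n))$ via a ``Bass-type'' bound $P^\tau_x(T_{B(x,r)^c}\leq s)\leq Cs/(rV(x,r))$, derived from Paley--Zygmund plus a second-moment estimate. Note first that Paley--Zygmund does \emph{not} give an inequality of that linear-in-$s$ shape: it only yields $P(T>\theta\,E[T])\geq (1-\theta)^2E[T]^2/E[T^2]$, a constant lower bound independent of how small $\theta$ is. (A corrected Paley--Zygmund argument would still suffice, since a constant lower bound on the non-exit probability is actually stronger than $V(x,n)/(4V(x,2n))$, but you would then need to track the second-moment constant carefully.) The paper instead uses the much simpler first-moment Markov-property identity
\[
E^\tau_x\big[T_{B(x,2n)}\big]\;\leq\; t+\sup_y E^\tau_y\big[T_{B(x,2n)}\big]\,P^\tau_x\big(T_{B(x,2n)}>t\big),
\]
together with the explicit Green's-function bounds $g_{B(x,2n)}(x,z)\geq n/2$ for $|z-x|\leq n$ and $g_{B(x,2n)}(y,z)\leq n$, to obtain directly
\[
P^\tau_x\big(T_{B(x,2n)}>t\big)\;\geq\;\frac{nV(x,n)-2t}{2nV(x,2n)}.
\]
Setting $t=\tfrac{n}{4}V(x,n)$ gives exactly $V(x,n)/(4V(x,2n))$, and then your Cauchy--Schwarz step (which is identical to the paper's) finishes. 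This avoids second moments entirely and produces the stated constant $1/16$ with no effort; I would recommend replacing your Paley--Zygmund plan with this argument.
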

\begin{proof} Part~(i) can be proved by the argument of \cite[Proposition~4.1]{Kum}, which in turn is based on the approach of \cite{BCK}. In particular, the proof of \cite[Proposition~4.1]{Kum} only uses a lower volume bound and the doubling property of the bounding function. See also the proof of \cite[Proposition~5]{Croy} for a similar argument.

Part~(ii) can be proved by an adaptation of the argument used to establish part (a). A version of this is stated as \cite[(6.11)]{Croy}, but with oversight of the fact that the volumes considered should be of the closed ball on the left-hand side, and the open ball on the right-hand side of the inequality.

As for part~(iii), we again apply a standard argument, versions of which appear in \cite[Section~4]{Kum} and \cite[Proposition~3.2]{KM}. This is particularly simple in our setting due to the specific form of the Green's function. Indeed, by an elementary Markov chain argument, it is possible to check that
\[E^\tau_y\left(T_{B(x,n)}\right)=\sum_{z}g_{B(x,n)}(y,z) \, \tau_z,\]
where $T_{B(x,n)}:=\inf\{t\geq 0:\:X_t\not\in B(x,n)\}$, $E^\tau_y$ is the expectation under $P_y^\tau$ and
\[g_{B(x,n)}(y,z)=\left\{
                    \begin{array}{ll}
                      \frac{(n-x+y)(n+x-z)}{2n}, & \hbox{if $x-n\leq y\leq z\leq x+n$;} \\
                      \frac{(n+x-y)(n-x+z)}{2n}, & \hbox{if $x-n\leq z\leq y\leq x+n$;} \\
                      0, & \hbox{otherwise.}
                    \end{array}
                  \right.\]
In particular, it holds that $g_{B(x,n)}(y,z)\leq \frac{n}{2}$ for any $y$ and $z$, and so
\[E^\tau_y\left(T_{B(x,n)}\right)\leq \frac{n}{2}V(x,n).\]
Similarly, since $g_{B(x,n)}(x,z)\geq \frac{n}{4}$ for $z\in[x-\frac{n}{2},z+\frac{n}{2}]$, it holds that
\[E^\tau_x\left(T_{B(x,n)}\right)\geq \frac{n}{4}V(x,\frac{n}{2}).\]
Consequently, applying the Markov property at time $t$, we find that
\begin{align*}
\frac{n}{2}V(x,n)&\leq E^\tau_x\left(T_{B(x,2n)}\right)\\
&\leq t+E^\tau_x\left((T_{B(x,2n)}-t)\indicator_{\{T_{B(x,2n)}>t\}}\right)\\
&\leq t+\sup_{y}E^\tau_y\left(T_{B(x,2n)}\right)P_x^\tau\left(T_{B(x,2n)}>t\right)\\
&\leq t+nV(x,2n)P_x^\tau\left(T_{B(x,2n)}>t\right),
\end{align*}
and rearranging yields
\[P_x^\tau\left(T_{B(x,2n)}>t\right)\geq \frac{nV(x,n)-2t}{2nV(x,2n)}.\]
To relate this inequality to the heat kernel, we apply the Cauchy-Schwarz inequality to obtain that
\begin{align*}
P_x^\tau\left(T_{B(x,2n)}>t\right)^2&\leq P_x^\tau\left(X_t\in B(x,2n)\right)^2\\
&= \left(\sum_{z\in B(x,2n)}p^\tau_t(x,z)\tau_z\right)^2\\
&\leq \sum_{z} p^\tau_t(x,z)^2\tau_z V(x,2n)\\
&= p^\tau_{2t}(x,x)V(x,2n).
\end{align*}
Finally, combining the two previous inequalities and setting $t=\frac{n}{4}V(x,n)$, we deduce the desired inequality.
\end{proof}

We are now ready to prove Theorems~\ref{thm:QHK} and \ref{thm:AHK}. In the following, we will write constants depending on the particular realisation of the environment $\tau$ as $c(\tau)$, for example; note that the value of such constants might change from line to line.

\begin{proof}[Proof of Theorem~\ref{thm:QHK}(i)] Recall that this part of the theorem applies to $\alpha\in(0,1)$. We start by checking the limsup part of the result. Firstly, by applying \eqref{eq:liminf_alpha01}, it is possible to deduce that, $\prob$-a.s., the assumptions of Proposition~\ref{prop:hke}(a) hold with $V_l(r):=c(\tau)r^{1/\alpha}(\log\log r)^{1-1/\alpha}$. (Strictly speaking, to handle small $r$, we should replace $r^{1/\alpha}(\log\log r)^{1-1/\alpha}$ by a function such as $\max\{1,r^{1/\alpha}(\log\log r)^{1-1/\alpha}\}$, but this does not affect the substance of what follows, and we choose to omit the truncation from the notation for brevity.) Noting that
\[h_l^{-1}(t)\sim C(\tau)t^{\frac{\alpha}{1+\alpha}}(\log\log t)^{\frac{1-\alpha}{1+\alpha}}\]
as $t\rightarrow\infty$, we thus obtain from Proposition~\ref{prop:hke}(a) that the limsup of the statement of Theorem~\ref{thm:QHK}(i) is finite. Secondly, observe that \eqref{eq:liminf_alpha01} further implies that there exists a subsequence $(n_i)_{i\geq 1}$ such that
\[c_1(\tau) \, n_i^{1/\alpha}\, (\log\log n_i)^{1-1/\alpha}\leq V(0,n_i)\leq V(0,2n_i)\leq c_2(\tau) \, n_i^{1/\alpha} \, (\log\log n_i)^{1-1/\alpha}.\]
Hence Proposition~\ref{prop:hke}(iii) yields that, along the subsequence $t_i=\frac{n_i}{2}V(0,n_i)$,
\[p_{t_i}^\tau(0,0)\geq \frac{c(\tau)}{n_i^{1/\alpha}(\log\log n_i)^{1-1/\alpha}}.\]
Further, since the sequence $(t_i)$ is bounded from above and below by constant multiples of $n_i^{1+1/\alpha}(\log\log n_i)^{1-1/\alpha}$, it follows that
\[p_{t_i}^\tau(0,0)\geq c(\tau) \, t_i^{-\frac{1}{1+\alpha}} \, (\log\log t_i)^{\frac{1-\alpha}{1+\alpha}},\]
which establishes that the limsup in the statement of Theorem~\ref{thm:QHK}(a) is strictly positive.

We next consider the liminf, starting with the upper bound. By \eqref{eq:limsup_alpha01}, for any $\varepsilon>0$, it $\prob$-a.s.\ holds that there exists a subsequence $(n_i)_{i\geq 1}$ such that
\begin{align*}
V(0,n_i-1)&\geq c(\tau) \, n_i^{1/\alpha} \, (\log n_i)^{1/\alpha}(\log\log n_i)^{1/\alpha}\\
V(0,n_i)&\leq c(\tau) \, n_i^{1/\alpha} \, (\log n_i)^{1/\alpha}(\log\log n_i)^{1/\alpha+\varepsilon}.
\end{align*}
Consequently, taking $t_i=2n_iV(0,n_i)$, we deduce from Proposition~\ref{prop:hke}(b) that
\[p_{t_i}^\tau(0,0)\leq \frac{c(\tau)}{n_i^{1/\alpha}(\log n_i)^{1/\alpha}(\log\log n_i)^{1/\alpha}}.\]
Since $t_i\leq c(\tau)n_i^{1+1/\alpha}(\log n_i)^{1/\alpha}(\log\log n_i)^{1/\alpha+\varepsilon}$, it follows that
\[p_{t_i}^\tau(0,0)\leq  c(\tau) \, t_i^{-\frac{1}{1+\alpha}}(\log t_i)^{-\frac{1}{1+\alpha}}\, (\log\log t_i)^{-\frac{1-\varepsilon}{1+\alpha}},\]
as desired. As for the lower bound on the liminf in the statement of Theorem~\ref{thm:QHK}(a), we apply both \eqref{eq:limsup_alpha01} and \eqref{eq:liminf_alpha01} to deduce that, for any $\varepsilon>0$, $\prob$-a.s., for all $n\in\mathbb{N}$,
\[c_1(\tau)\, n^{1/\alpha} \, (\log\log n)^{1-1/\alpha}\leq V(0,n-1)\leq V(0,2n)\leq c_2(\tau)\, n^{1/\alpha} \, (\log n)^{1/\alpha+\varepsilon}.\]
Consequently, if $t$ satisfies
\[\frac{n-1}{2}V(0,n-1)\leq t\leq \frac{n}{2}V(0,n),\]
then the monotonicity of the on-diagonal heat kernel (see \cite[Lemma~5.11]{Barbook}, for example) and Proposition~\ref{prop:hke}(iii) imply that
\[p_t^\tau(0,0)\geq p_{\frac{n}{2}V(0,n)}^\tau(0,0)\geq \frac{V(0,n)^2}{16V(0,2n)^3}\geq \frac{c(\tau)}{n^{1/\alpha}(\log n)^{3/\alpha+4\varepsilon}}\geq \frac{c(\tau)}{t^{\frac{1}{1+\alpha}}(\log t)^{\frac{3}{\alpha}+5\varepsilon}},\]
from which the result follows.
\end{proof}

\begin{proof}[Proof of Theorem~\ref{thm:QHK}(ii)] The proof of Theorem~\ref{thm:QHK}(ii) is essentially identical to that of Theorem~\ref{thm:QHK}(i), only replacing the estimates \eqref{eq:limsup_alpha01} and \eqref{eq:liminf_alpha01} by \eqref{eq:limsup_alpha1} and \eqref{eq:liminf_alpha1}, respectively.
\end{proof}

\begin{proof}[Proof of Theorem~\ref{thm:QHK}(iii)] Let $\alpha>1$. Given the scaling limit of Theorem~\ref{thm:SL}(iii), together with the fact that, $\prob$-a.s.,
\[n^{-1}\sum_{z}\tau_z\delta_{n^{-1}z}\]
converges vaguely to the Lebesgue measure multiplied by $\mathbf{E}(\tau_0)$, it is straightforward to check that a continuous-time version of \cite[Assumption 1]{CHLLT} with scaling factors $\alpha(n)=n, \beta(n)=n\mathbf{E}(\tau_0)$ and $\gamma(n)=n^2\mathbf{E}(\tau_0)$ holds when we consider the graphs $G^n$ of \cite{CHLLT} to be given by $n^{-1}\mathbb{Z}$ (equipped with nearest neighbour edges), $d_{G^n}$ to be the usual shortest path metric on $G^n$, $\nu^n$ to be the measure on $n^{-1}\mathbb{Z}$ placing mass $\tau_x$ at $x$, and $X^n$ to be the continuous-time random walk of the associated symmetric Bouchaud trap model. Thus, as commented at the end of the introduction of \cite{CHLLT}, it is possible to deduce Theorem~\ref{thm:QHK}(iii) from a continuous-time version of \cite[Theorem~1]{CHLLT} if one can further establish a continuous-time version of \cite[Assumption 2]{CHLLT}, i.e., $\prob$-a.s., for every $x_0\in[0,\infty)$ and compact interval $I\subseteq(0,\infty)$,
\begin{equation}\label{ec}
\lim_{\delta\rightarrow0}\limsup_{\lambda\rightarrow\infty}\sup_{\substack{x,y:\:|x|,|y|\leq x_0\\|x-y|\leq \delta}}\sup_{t\in I}
\mathbf{E}(\tau_0)\lambda
\left| \, p^\tau_{\mathbf{E}(\tau_0)\lambda^2t}\left(0,\lfloor \lambda x\rfloor\right)-\, p^\tau_{\mathbf{E}(\tau_0)\lambda^2t}\left(0,\lfloor \lambda y\rfloor\right)\right|=0.
\end{equation}
Towards checking this, we first observe that, $\mathbf{P}$-a.s., $\tau_x\geq 1$ for all $x$ and so $V(0,n)\geq
n$. Therefore we have from Proposition~\ref{prop:hke}(i) that, $\mathbf{P}$-a.s.\
\begin{equation}\label{odu}
p^\tau_t(0,0)\leq C \, t^{-1/2}.
\end{equation}
Next, it is a simple application of the Cauchy-Schwarz inequality to deduce that, for all $x,y\in\mathbb{Z}$,
\[\left|p^\tau_t(0,x)-p^\tau_t(0,y)\right|^2\leq |y-x|\sum_z\left(p^\tau_t(0,z)-p^\tau_t(0,z+1)\right)^2\]
The sum in the right-hand side here is the Dirichlet energy of the function $p^\tau_t(0,\cdot)$, and the argument of \cite[Proposition~4.16]{Bar} gives that this is bounded above by $t^{-1}p^\tau_t(0,0)$. Consequently, recalling the bound at \eqref{odu}, we find that, $\prob$-a.s.,
\begin{equation}\label{eq:holder-HK}
\left|p^\tau_t(0,x)-p^\tau_t(0,y)\right|^2\leq C|y-x|t^{-3/2},\qquad \forall x,y\in\mathbb{Z}.\end{equation}
From this estimate, the equicontinuity condition at \eqref{ec} readily follows, and thus the proof is complete.
\end{proof}

\begin{proof}[Proof of Theorem~\ref{thm:AHK}(i)] The result can be checked by applying a continuous-time version of \cite[Proposition~1.3]{KM}. The required input is given by the limit
\begin{equation}\label{vconv}
\lim_{\lambda\rightarrow\infty}\inf_{n\in\mathbb{N}}\prob\left(\lambda^{-1}v_\alpha(n)\leq V(0,n)\leq \lambda v_\alpha(n)\right)= 1,
\end{equation}
where
\begin{equation}\label{vdef}
v_\alpha(r):=\left\{
                 \begin{array}{ll}
                   r^{1/\alpha}, & \hbox{if $\alpha\in (0,1)$;} \\
                   r\max\{1,\log r\}, & \hbox{if $\alpha=1$;} \\
                   r, & \hbox{if $\alpha>1$.}
                 \end{array}
               \right.
\end{equation}
Since \eqref{vconv} is a straightforward consequence of the convergence in distribution of $V(0,n)/v_\alpha(n)$ to a random variable $S_\alpha$ that takes values in $(0,\infty)$ (cf.\ the convergence results described in the proof of Lemma~\ref{lem:median}), we are done.
\end{proof}

\begin{proof}[Proof of Theorem~\ref{thm:AHK}(ii)] For any $\varepsilon>0$, the lower bound is an immediate consequence of Theorem~\ref{thm:AHK}(i). As for the upper bound, we will require a quantitative version of \eqref{vconv}. In particular, writing $h_\alpha(r)=rv_\alpha(r)$, suppose that the following conditions hold for some $n\geq 2$, $t>0$ and $\lambda\geq 1$:
\[h^{-1}_{\alpha}(t/\lambda)\in[n,n+1),\]
\[\lambda^{-1}v_\alpha(n)\leq V(0,n-1)\leq 2V(0,n)\leq \lambda v_\alpha(n).\]
We then have from Proposition~\ref{prop:hke}(ii) that
\[p_t^\tau(0,0)\leq p_{2nV(0,n)}^\tau(0,0)\leq \frac{2}{V(0,n-1)}\leq \frac{2\lambda}{v_\alpha(n)} =\frac{2\lambda n}{h_\alpha(n)}\leq \frac{c\lambda^2n}{t}\leq \frac{c\lambda^2h_\alpha^{-1}(t/\lambda)}{t},\]
where $c$ is deterministic. So,
\[p_t^\tau(0,0)\leq \left\{    \begin{array}{ll}
                                         c\lambda^{\frac{1+2\alpha}{1+\alpha}}\phi_\alpha(t), & \hbox{if $\alpha\in (0,1)$;} \\
                                         c\lambda^{2}\phi_\alpha(t), & \hbox{if $\alpha=1$;}\\
                                         c\lambda^{\frac{3}{2}}\phi_\alpha(t), & \hbox{if $\alpha>1$,}
                                       \end{array}
                                     \right.\]
where for $\alpha=1$, we simply applied the bound $h_\alpha^{-1}(t/\lambda)\leq h_\alpha^{-1}(t)$, and otherwise, we applied the exact expression for $h_\alpha^{-1}$. Hence, setting $\theta=\frac{1+2\alpha}{1+\alpha}$ if $\alpha\in (0,1)$, $\theta=2$ if $\alpha=1$, and $\theta=\frac32$ if $\alpha>1$, and reparametrising, we can conclude that: if $h^{-1}_{\alpha}(t/\lambda^{1/\theta})\in[n,n+1)$ for some $n\geq 2$, then
\begin{align}
\lefteqn{\prob\left(p_t^\tau(0,0)\geq c \lambda \phi_\alpha(t)\right)}\nonumber\\
&\leq \prob\left( V(0,n-1)<\lambda^{-1/\theta}v_\alpha(n)\right)+\prob\left(2V(0,n)\geq \lambda^{1/\theta} v_\alpha(n)\right)\nonumber\\
&\leq C\lambda^{-c'/\theta},\label{ub1}
\end{align}
where we have applied Proposition~\ref{prop:tail} to obtain the second inequality. Furthermore, if it holds that $h^{-1}_{\alpha}(t/\lambda^{1/\theta})<2$, then $t\leq \lambda^{1/\theta}h_\alpha(2)$, which implies
\[\lambda \phi_\alpha(t)\geq \lambda \phi_\alpha(c\lambda^{1/\theta})\geq \lambda^\varepsilon\]
for suitably small $\varepsilon$. Consequently, applying the basic facts that $p_t^\tau(0,0)\leq \tau_0^{-1}$ and $\tau_x\geq 1$, as well as Markov's inequality,
\[\prob\left(p_t^\tau(0,0)\geq c \lambda \phi_\alpha(t)\right)\leq \prob\left(\tau_0^{-1}\geq c \lambda^\varepsilon\right)
\leq \frac{\mean[\tau_0^{-p}]}{(c \lambda^\varepsilon)^p}\leq C\lambda^{-c'},\]
where $c'$ can be taken arbitrarily large by choosing $p>0$ large. 
Hence, we have shown that: for all $t>0$ and $\lambda\geq 1$,
\[\prob\left(p_t^\tau(0,0)\geq c \lambda \phi_\alpha(t)\right)\leq 
C\lambda^{-c'}.\]
This readily implies the upper bound of Theorem~\ref{thm:AHK}(ii). To check the final claim of Theorem~\ref{thm:AHK}, it will suffice to check that 
$c'$ can 
be taken to be strictly greater than 1 in the previous inequality when $\alpha>3/2$. This is straightforward since, when $\alpha>3/2$, we have $\theta=3/2$, and Proposition~\ref{prop:tail} allows us to take 
$c'>3/2$ in \eqref{ub1}.
\end{proof}

\section{Quantitative homogenization for $\alpha>2$}\label{sec:qh}

\subsection{Quantitative CLT}

In this subsection, we will prove Theorem~\ref{thm:be}. Our argument will be based on the following general quantitative central limit theorem for martingales.

\begin{theorem} \label{thm:be_hb}
Let $(N_t)_{t\geq 0}$ be a locally square-integrable martingale (with respect to some probability measure $P$) and denote by $\Delta N_t:=N_t-N_{t-}$ its jump process and by $\langle N \rangle_t$ its quadratic variation process. Then, for any $n>1$, there exists a constant $c>0$ depending only on $n$ such that
\[ \sup_{x \in \bbR} \left|  P\big[N_1 \leq  x  \ \big] - \Phi(x)\right| \;\leq \; c \, \Big( E\big[ \big| \langle N\rangle_1 -1\big|^n \big]+ E \Big[ \sum_{0\leq t \leq 1} \big| \Delta N_t\big|^{2n} \Big]\Big)^{1/(2n+1)}.\]
\end{theorem}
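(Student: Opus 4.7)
The plan is to proceed by the classical characteristic function approach combined with Esseen's smoothing inequality. First, Esseen's smoothing inequality reduces the Kolmogorov distance between the laws of $N_1$ and the standard normal to a pointwise estimate on the characteristic function $\phi(u):=E[e^{iuN_1}]$. Namely, for any $T>0$,
\begin{equation*}
\sup_{x\in\bbR}\bigl|P[N_1 \leq x] - \Phi(x)\bigr| \;\leq\; C\int_{-T}^T \left|\frac{\phi(u) - e^{-u^2/2}}{u}\right|du + \frac{C}{T},
\end{equation*}
with $C$ absolute. The task is thus to bound $|\phi(u) - e^{-u^2/2}|$ pointwise and then to choose $T$ optimally at the end.

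The second step is to exploit the martingale structure via the exponential semimartingale
\begin{equation*}
Z_t \;:=\; \exp\bigl(iuN_t + \tfrac{u^2}{2}\langle N\rangle_t\bigr).
\end{equation*}
For a continuous martingale $N$, It\^o's formula shows that $Z$ is a local martingale. In the jump case, It\^o's formula for semimartingales introduces an additional jump correction and yields, after a standard localization to justify taking expectations,
\begin{equation*}
E[Z_1] - 1 \;=\; E\Bigl[\sum_{0<s\leq 1} Z_{s-}\bigl(e^{iu\Delta N_s} - 1 - iu\Delta N_s + \tfrac{u^2}{2}(\Delta N_s)^2\bigr)\Bigr].
\end{equation*}
A Taylor expansion of $e^{iv} - 1 - iv + v^2/2$, combined with an interpolation between the small- and large-jump regimes, then produces
\begin{equation*}
\bigl|E[Z_1] - 1\bigr| \;\leq\; C_n |u|^{2n}\, E\Bigl[\sum_{0\leq s\leq 1}|\Delta N_s|^{2n}\Bigr],
\end{equation*}
valid at least for $|u|$ comparable to the truncation scale $T$ to be chosen.

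The third step is the decomposition, following from $e^{iuN_1} = Z_1 e^{-u^2\langle N\rangle_1/2}$,
\begin{equation*}
\phi(u) - e^{-u^2/2} \;=\; e^{-u^2/2}\bigl(E[Z_1] - 1\bigr) \;+\; E\Bigl[Z_1\bigl(e^{-u^2\langle N\rangle_1/2} - e^{-u^2/2}\bigr)\Bigr].
\end{equation*}
The first piece is controlled by step two. For the second, observe that $|Z_1 e^{-u^2\langle N\rangle_1/2}| = 1$, so the integrand modulus equals $|1 - e^{u^2(\langle N\rangle_1 - 1)/2}|$; for $|u|$ bounded this is controlled by a constant multiple of $u^2|\langle N\rangle_1 - 1|$ on the event that $\langle N\rangle_1$ is comparable to $1$, while the complementary event is handled by Markov's inequality applied to $E[|\langle N\rangle_1 - 1|^n]$. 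Together this yields an estimate essentially of the form $u^2\,E[|\langle N\rangle_1 - 1|^n]^{1/n}$ plus negligible tail contributions.

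Inserting both estimates into Esseen's inequality and optimizing $T$ so as to balance the smoothing error $1/T$ against the polynomial growth $T^{2n}$ inside the characteristic function integral produces the advertised exponent $1/(2n+1)$. The main technical obstacle is the factor $|Z_1| = e^{u^2\langle N\rangle_1/2}$ appearing in step three, which grows exponentially in $u$ on the event that $\langle N\rangle_1$ is atypically large. Careful handling of this bad event, via Markov's inequality using the $n$-th moment hypothesis and a restriction $|u|\leq T$ with $T$ not too large, is what forces the parameter $n$ to enter only as a moment exponent and ultimately dictates the fractional power $1/(2n+1)$ rather than a better rate.
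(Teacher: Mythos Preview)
The paper does not prove this theorem at all: its entire proof is the citation ``See \cite[Theorem~2]{Ha88} (cf.\ \cite{HB70}).'' So there is no argument in the paper to compare your sketch against; the result is imported as a black box.

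As for your sketch itself, the overall architecture (Esseen smoothing plus an exponential-martingale identity) is indeed the classical route, but there is a real gap you have not closed. In step two you claim
\[
\bigl|E[Z_1]-1\bigr|\;\le\;C_n|u|^{2n}\,E\Bigl[\sum_{s\le 1}|\Delta N_s|^{2n}\Bigr],
\]
but the right-hand side of your It\^o identity carries the factor $Z_{s-}$, whose modulus is $e^{u^2\langle N\rangle_{s-}/2}$. This is \emph{not} bounded, and the event $\{\langle N\rangle_{s-}\text{ large}\}$ is correlated with the jump sum you are trying to control, so you cannot simply pull it outside or absorb it into a constant. You flag exactly this obstacle for step three, but it bites equally hard---arguably harder---in step two, and your proposal does not say how to handle it there. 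A second, more minor, issue is that $\langle N\rangle$ need not be continuous for a general locally square-integrable martingale (only predictable), so the It\^o computation leading to your displayed identity needs more care.

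The published proofs (Haeusler, and Heyde--Brown in discrete time) get around the unbounded-$Z$ problem by different means: discretisation to reduce to martingale-difference arrays, truncation of both the increments and the quadratic variation, and a Lindeberg-type swapping rather than a direct bound on $E[Z_1]-1$. Your outline would need a comparable device before the optimisation in $T$ can legitimately produce the exponent $1/(2n+1)$.
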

\begin{proof}
 See \cite[Theorem~2]{Ha88} (cf.\ \cite{HB70}).
\end{proof}

To apply this, we will establish the following quenched and annealed estimates for the BTM.

\begin{prop} \label{prop:estV}
Suppose that  $\alpha>2$ and let $\sigma^2\ldef 1/ \mean[\tau_0]$.
\begin{enumerate}
\item[(i)] It holds that
\begin{align*}
\mean \Bigg[ E_0^\tau \bigg[\Big| \frac{\langle X \rangle_t} t -\sigma^2 \Big|^2 \bigg] \Bigg] =O\left(t^{-1/2}\log t\right).
\end{align*}

\item[(ii)] For any $\eta>0$ and $\prob$-a.e.\ $\tau$, it holds that
\begin{align*}
E_0^\tau \bigg[\Big| \frac{\langle X \rangle_t} t -\sigma^2 \Big|^2 \bigg] =O\left( t^{-\frac 14+\frac{3}{4(2\alpha-1)}+\eta}\right).
\end{align*}
\end{enumerate}
\end{prop}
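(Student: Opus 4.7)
The plan is to use the fact, mentioned in the paragraph after Theorem~\ref{thm:be}, that the BTM is a time change of simple random walk, and thereby reduce both estimates to moment bounds on a continuous-time version of the random walk in random scenery. Let $(Y_s)_{s\geq 0}$ denote continuous-time unit-rate simple random walk on $\bbZ$ started at $0$ and independent of $\tau$, set $A_s := \int_0^s \tau_{Y_u}\,du$, and realise the BTM as $X_t = Y_{A^{-1}(t)}$. Applying the generator \eqref{eq:geneBTM} to $f(x)=x^2$ yields $\langle X\rangle_t = \int_0^t \tau_{X_s}^{-1}\,ds$, and the change of variables $s=A(u)$ then produces the exact identity
\[\langle X \rangle_t \;=\; A^{-1}(t).\]
Writing $B_s := A_s - s\,\mean[\tau_0]$, the relation $t = A^{-1}(t)\,\mean[\tau_0] + B_{A^{-1}(t)}$ rearranges to
\[\frac{\langle X\rangle_t}{t} - \sigma^2 \;=\; -\frac{\sigma^2}{t}\,B_{A^{-1}(t)}.\]
Because $\tau_y \geq 1$ almost surely we obtain the deterministic bound $A^{-1}(t) \leq t$, so that the task reduces to controlling the second moment of $B_s$ on the deterministic range $s\in[0,t]$, up to a supremum/oscillation correction for the random upper limit.

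For part~(i), I would compute the annealed second moment directly. Writing $B_s = \sum_y \eta_y L_s^Y(y)$ with $\eta_y := \tau_y - \mean[\tau_0]$ and $L_s^Y(y) := \int_0^s \indicator_{\{Y_u = y\}}\,du$, the independence of the scenery and $\mathrm{Var}(\tau_0)<\infty$ (by $\alpha>2$) give, for fixed $s$,
\[\mean\!\big[E_0^\tau[B_s^2]\big] \;=\; \mathrm{Var}(\tau_0)\,E_0^Y\!\bigg[\sum_y L_s^Y(y)^2\bigg] \;=\; O(s^{3/2}),\]
by the classical Kesten--Spitzer-type identity for one-dimensional SRW local times. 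To pass from deterministic $s$ to $s = A^{-1}(t)\leq t$, I would discretise $[0,t]$ into $K$ equal subintervals, use a union bound on $\max_k B_{s_k}^2$ and a parallel moment estimate on the within-interval oscillation $\int_{s_k}^{s_{k+1}}\eta_{Y_u}\,du$, and optimise over $K$; this gives $\mean[E_0^\tau[\sup_{s \leq t} B_s^2]] = O(t^{3/2}\log t)$, and dividing by $t^2$ yields the claimed annealed rate.

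For part~(ii), I would apply the quenched random-walk-in-random-scenery estimate of Proposition~\ref{prop4-3} (the quenched counterpart of the annealed computation used in part (i)) to obtain a $\prob$-a.s.\ second-moment bound on $B_s$. The same discretisation/oscillation argument used in part~(i) then transfers this bound to $E_0^\tau[\sup_{s\leq t}B_s^2]$ at the cost of at most a sub-polynomial factor that is absorbed into the $\eta$-slack; dividing by $t^2$ gives the stated quenched exponent $-1/4 + 3/(4(2\alpha-1))+\eta$. The main obstacle lies not in Proposition~\ref{prop:estV} itself, which becomes almost purely algebraic once the time-change identity $\langle X\rangle_t = A^{-1}(t)$ is in hand, but in the underlying Proposition~\ref{prop4-3}: because $\alpha>2$ only guarantees finite variance of $\tau_0$ (with $\mathrm{Var}(\eta_y^2)$ possibly infinite for $\alpha\leq 4$), the quenched bilinear form $\sum_{y,z}\eta_y \eta_z L_s^Y(y) L_s^Y(z)$ must be handled by truncating $\eta_y$ at a level carefully tuned against both the tail exponent $\alpha$ and the SRW local-time scale $s^{1/2}$, and it is precisely this truncation balance that produces the exponent $3/(4(2\alpha-1))$.
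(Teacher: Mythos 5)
Your proposal shares the paper's core reduction to the time--change functional $A$ and the second--moment bounds of Proposition~\ref{prop4-3}, but diverges (and falters) precisely at the step where the random time $A_t^{-1}$ enters, so the two proofs are genuinely different and your version as sketched does not close.

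Two remarks on the quadratic--variation identity. Your computation $\langle X\rangle_t = \int_0^t \tau_{X_s}^{-1}\,ds = A_t^{-1}$ is correct for the \emph{predictable} quadratic variation, and it is a nice observation that it follows exactly from the generator plus a change of variables. The paper instead works with the optional quadratic variation $[Y]_{A_t^{-1}}$ (the number of jumps of $Y$ up to time $A_t^{-1}$), decomposes it as $A_t^{-1} + ([Y]_{A_t^{-1}} - A_t^{-1})$, and disposes of the compensated Poisson correction via Doob, which only contributes $O(t^{-1})$. So either convention for Theorem~\ref{thm:be_hb} leads to the same final rate; your formulation is marginally cleaner if the predictable version is meant.

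The genuine gap is in how you pass from deterministic to random times. You bound $B_{A_t^{-1}}^2 \leq \sup_{s\leq t} B_s^2$ (valid, since $A_t^{-1}\leq t$) and then propose a \emph{single--scale} discretisation into $K$ intervals, with a union bound on $\max_k B_{s_k}^2$ plus an oscillation estimate, optimising over $K$. This does not achieve the claimed rate. In the annealed case, $\mean E[\max_k B_{s_k}^2] \leq \sum_k \mean E[B_{s_k}^2] = O(K t^{3/2})$, while the within--interval oscillation $\sup_{s\in I_k}|B_s - B_{s_{k-1}}| \leq \int_{I_k}|\bar\tau_{Y_u}|\,du$ has annealed second moment of order $|I_k|^2 = (t/K)^2$ (no cancellation, since the integrand is an absolute value with nonzero mean), giving $\sum_k \mean E[\sup_{I_k}|\cdot|^2] = O(t^2/K)$. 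Balancing $Kt^{3/2}\sim t^2/K$ yields $K\sim t^{1/4}$ and $\mean E[\sup_{s\leq t}B_s^2] = O(t^{7/4})$, hence a rate of $O(t^{-1/4})$, not the required $O(t^{-1/2}\log t)$. A proper multi--scale (dyadic) chaining argument \emph{would} give $O(t^{3/2})$ in the annealed case, but that is not what you describe, and in the quenched case chaining runs into an additional obstacle: controlling increments $B_s - B_r$ requires bounds of the form $E_x^\tau[B_{s-r}^2]$ uniformly over starting points $x$ in the range of the walk, whereas Proposition~\ref{prop4-3}(ii) only provides a bound at $x=0$ with a $\tau$--dependent implicit constant. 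The paper sidesteps all of this with the inversion $A_t^{-1} > \sigma^2 t + x \Leftrightarrow A_{\sigma^2 t + x} < t$, which turns the random--time estimate into a family of deterministic--time tail estimates, directly amenable to Chebyshev with Proposition~\ref{prop4-3}, and then integrates the tail; this uses the $s=0$ bound only and is substantially tighter.

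Your description of the difficulties inside Proposition~\ref{prop4-3} (truncation of $\eta_y$ balanced against the tail exponent and the local--time scale) is accurate, but that is outside the scope of the proposition under review.
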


In order to show Proposition~\ref{prop:estV}, we consider a continuous time simple random walk $(Y_t)_{t\geq 0}$ on $\bbZ$ with unit jump rates and associated heat kernel denoted by $p^Y_t(x,y)$, and set
\begin{align*}
A_t\ldef \int_0^t  \tau_{Y_s} \, ds, \qquad t\geq 0.
\end{align*}
The latter process is a continuous time version of a random walk in random scenery, as studied in \cite{DF19, DF20}. (The original model in discrete time was first introduced by Borodin \cite{Bo79, Bo79a} and Kesten-Spitzer \cite{KS79} independently.) For any configuration $\tau=(\tau_x)_{x\in\bbZ}$, in a slight abuse of notation, we denote by $E_x^\tau$ the quenched law of $(\tau_{Y_t})_{t\geq 0}$ when the random walk $(Y_t)_{t\geq 0}$ starts at $x\in \bbZ$. Importantly, we note that  \[X_t=Y_{A_t^{-1}},\]
where $A^{-1}$ denotes the right-continuous inverse of $A$. We will show that Proposition~\ref{prop:estV} is a consequence of the following statement on the time-change functional $A$.

\begin{prop}\label{prop4-3}
Suppose that  $\alpha>2$.
\begin{enumerate}
\item[(i)] It holds that
\begin{align} \label{eq:At_annealed}
\mean\bigg[ E_0^\tau \Big[\big| A_t - \mean[\tau_0] t \big|^2 \Big] \bigg] =O\left( t^{3/2}\right).
\end{align}

\item[(ii)] For any $\eta>0$ and $\prob$-a.e.\ $\tau$, it holds that
\[E_0^\tau \Big[\big| A_t - \mean[\tau_0] t \big|^2 \Big]  =O\left(t^{\frac74+\frac{3}{4(2\alpha-1)}+\eta}\right).\]
\end{enumerate}
\end{prop}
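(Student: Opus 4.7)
The plan is to prove both parts via the occupation-time identity
\[A_t - \mean[\tau_0]\,t \;=\; \sum_{x \in \bbZ}\xi_x\, L_t^x, \qquad \xi_x := \tau_x - \mean[\tau_0],\]
where $L_t^x := \int_0^t \indicator_{\{Y_s=x\}}\,ds$ is the local time of $Y$ at $x$ and we used $\sum_x L_t^x = t$. Since $\alpha>2$, the $\xi_x$ are i.i.d.\ centered, with $\mathrm{Var}(\xi_0)<\infty$ and $\mean[|\xi_0|^p]<\infty$ for every $p \in [1,\alpha)$. Note moreover that the $L_t^x$ are functions of $Y$ alone, hence independent of $\tau$.

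For part (i), squaring, taking $E_0^\tau$, and then $\mean$, independence of the $\xi_x$'s across distinct sites kills the off-diagonal contributions and leaves
\[\mean\big[E_0^\tau[(A_t - \mean[\tau_0]t)^2]\big] \;=\; \mathrm{Var}(\tau_0)\cdot E_0\Big[\sum_x (L_t^x)^2\Big].\]
The right-hand side is the expected self-intersection local time of the one-dimensional unit-rate continuous-time simple random walk, and the Fubini manipulation $E_0[\sum_x (L_t^x)^2] = 2\int_0^t\!\!\int_0^s p_{s-u}^Y(0,0)\,du\,ds$ combined with $p_r^Y(0,0) \asymp r^{-1/2}$ yields the claimed $O(t^{3/2})$.

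For part (ii), set $\Sigma_t(\tau) := E_0^\tau[(A_t-\mean[\tau_0]t)^2]=\sum_{x,y}\xi_x\xi_y\,q_t(x,y)$, with $q_t(x,y):=E_0[L_t^x L_t^y]$; the plan is to combine a high $\mean$-moment bound for $\Sigma_t$ with Borel-Cantelli along a polynomial subsequence followed by a short-time interpolation. Fix $p \in (2,\alpha)$. Conditional on $Y$, the random variable $Z_t:=\sum_x\xi_xL_t^x$ is a sum of independent centered random variables with deterministic weights $L_t^x$, so Rosenthal's inequality gives
\[\mean\big[|Z_t|^p\,\big|\,Y\big]\;\leq\; C_p\,\Big(\mathrm{Var}(\tau_0)\sum_x(L_t^x)^2\Big)^{p/2} + C_p\,\mean[|\xi_0|^p]\sum_x(L_t^x)^p.\]
Taking $E_0$ and applying the one-dimensional self-intersection estimates $E_0[(\sum_x(L_t^x)^2)^{p/2}]=O(t^{3p/4})$ and $E_0[\sum_x(L_t^x)^p]=O(t^{(p+1)/2})$ yields $\mean[E_0^\tau[|Z_t|^p]]=O(t^{3p/4})$, since $3p/4 \geq (p+1)/2$ for $p \geq 2$. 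Jensen's inequality $\Sigma_t^{p/2}\leq E_0^\tau[|Z_t|^p]$ together with Markov then deliver $\prob[\Sigma_t\geq t^b]=O(t^{p(3-2b)/4})$. Applying Borel-Cantelli along a polynomial subsequence $t_k=k^m$ gives $\Sigma_{t_k}\leq t_k^{b}$ almost surely whenever $b>3/2+2/(mp)$, and I would extend the bound to all $t \in [t_k,t_{k+1}]$ via
\[\Sigma_t\;\leq\; 2\,\Sigma_{t_k} + 2\,E_0^\tau\big[(A_t-A_{t_k}-\mean[\tau_0](t-t_k))^2\big],\]
controlling the second summand by the Markov property at time $t_k$ combined with an analogue of the previous moment bound uniform in the starting point. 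A union bound over the $O(\sqrt{t_k})$ possible positions of $Y_{t_k}$ costs an additional $\sqrt{t_k}$ factor in the Borel-Cantelli input, and optimizing $m$ and $p$ (subject to $p<\alpha$) then produces the announced exponent $\tfrac{7}{4}+\tfrac{3}{4(2\alpha-1)}+\eta$.

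The main obstacle will be the uniform-in-starting-point moment estimate needed for the interpolation: as $\alpha\downarrow 2$, the Rosenthal moment $p$ is forced close to $2$, the polynomial subsequence cannot be made too dense without spoiling Borel-Cantelli summability, and the union bound over the $O(\sqrt{t_k})$ possible locations of $Y_{t_k}$ becomes costly. The specific form $3/(4(2\alpha-1))$ of the correction term reflects precisely this three-way trade-off between the heavy-tail moment threshold, the subsequence density, and the union-bound cost.
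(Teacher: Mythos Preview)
Your proof of part~(i) is correct and essentially identical to the paper's: both compute $\mean\big[E_0^\tau[(A_t-\mean[\tau_0]t)^2]\big]=\mathrm{Var}(\tau_0)\,E_0[\sum_x (L_t^x)^2]$ and bound the self-intersection local time via the double integral of $p^Y_r(0,0)$.

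For part~(ii), your route is genuinely different from the paper's. The paper does \emph{not} use Rosenthal moments plus a subsequence argument. Instead it truncates the scenery, setting $\bar A_t^\varepsilon:=\int_0^t(\bar\tau_{Y_s}\wedge t^\varepsilon)\,ds$, and applies Talagrand's convex-distance concentration inequality to the bounded functional $\tau\mapsto E_0^\tau[|\bar A_t^\varepsilon|^2\indicator_{\{\sup_{s\le t}|Y_s|<2t\}}]$ to obtain an a.s.\ bound of order $t^{7/4+3\varepsilon}$ on its deviation from its mean. The truncation error is then handled directly via the tail of $\tau_0$, and the specific exponent $\tfrac74+\tfrac{3}{4(2\alpha-1)}$ arises from balancing the concentration cost $3\varepsilon$ against the truncation error $2-2(\alpha-2)\varepsilon$. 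The Talagrand approach has the virtue of giving, for each fixed $t$, a sub-Gaussian deviation bound in the environment, which makes the Borel--Cantelli step immediate and removes any need for interpolation in $t$.

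Your scheme is plausible in outline, but there are two genuine gaps. First, the interpolation step is not under control: $Y_{t_k}$ does not take only $O(\sqrt{t_k})$ values, so you must restrict to an event such as $\{|Y_{t_k}|\le C\sqrt{t_k\log t_k}\}$ and treat its complement; more seriously, even after a union bound over starting points $z$ you only obtain a.s.\ control of $\Sigma_{s,z}$ along the \emph{subsequence} times, whereas the increments $A_t-A_{t_k}$ require control at the intermediate times $s=t-t_k\in(0,t_{k+1}-t_k]$, which are not on your grid. You need either a monotonicity/crude bound for the increment (e.g.\ $|A_t-A_{t_k}|\le (t-t_k)\max_{|x|\le Ct}\tau_x$ together with the a.s.\ bound $\max_{|x|\le n}\tau_x\le Cn^{1/\alpha+\delta}$) or a second, finer grid. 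Second, and relatedly, your claim that the optimization over $m$ and $p<\alpha$ reproduces \emph{exactly} $\tfrac74+\tfrac{3}{4(2\alpha-1)}$ is unsupported: that number is an artifact of the paper's truncation-versus-concentration trade-off, not of a Rosenthal/union-bound/subsequence calculation. If you carry your approach through with the crude increment bound above you will obtain an exponent of the form $\tfrac32+f(\alpha)$ for some explicit $f$, which need not coincide with the paper's (and may in fact be smaller for large $\alpha$). You should either carry out the optimization explicitly and report the exponent your method actually gives, or switch to the concentration argument if the goal is to match the stated bound.
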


\begin{proof}
(i) For any $t\geq 0$, by the on-diagonal heat kernel bound $p^Y_t(x,x)\leq c(1\wedge t^{-1/2})$ of \cite[Theorem~6.28]{Barbook},
\begin{align*}
\mean\bigg[ E_0^\tau \Big[\big| A_t - \mean[\tau_0] t \big|^2 \Big] \bigg]&=\int_0^t \int_0^t \Big( \mean[ E^\tau_0[\tau_{Y_u} \tau_{Y_v}]  ] - \mean[\tau_0]^2 \Big) \, du \, dv \\
&=2 \, \mathrm{Var}(\tau_0)  \int_0^t \int_v^t  \, p^Y_{u-v}(0,0) \, du \, dv\\
&\leq c \int_0^t \int_v^t  \, \big(1\wedge (u-v)^{-1/2} \big)  \, du \, dv \\
&\leq c  \int_0^t  \big(1+ (t-v)^{1/2} \big) \, dv \\
&\leq c \, t^{3/2},
\end{align*}
which shows (i).
\medskip

(ii) For this part of the proof, we will use similar arguments to those of \cite{DF19}.  Define $\bar A_t\ldef \int_0^t  \bar\tau_{Y_s} \, ds$, $t\geq 0$, where $\bar\tau_x \ldef \tau_x - \mean[\tau_0]$, $x\in \bbZ$. Further, for any $\varepsilon>0$, set $\bar A_t^\varepsilon \ldef \int_0^t  \bar\tau_{Y_s}\wedge t^\varepsilon \, ds$, $t\geq 0$, so that we have the obvious bound $|\bar A_t^\varepsilon| \leq t^{1+\varepsilon}$.

\smallskip
\noindent
\emph{Step~1.} In our first step, we show that for $\prob$-a.e.\ $\tau$,
\begin{align} \label{eq:bound_Aeps}
\Big| E_0^\tau \big[\big| \bar A_t^\varepsilon \big|^2 \big] -\mean\big[ E_0^\tau \big[\big| \bar A_t^\varepsilon \big|^2 \big] \big]\Big|=O\left( t^{7/4+3\varepsilon}\right).
\end{align}
To this end, we start by noting that
\[E_0^\tau \big[\big| \bar A_t^\varepsilon \big|^2 \indicator_{\{\sup_{s\leq t}|Y_s|\geq 2t\}}\big]\leq 
t^{2(1+\varepsilon)}P_0\left(\sup_{s\leq t}|Y_s|\geq 2t\right)\leq Ce^{-ct},\]
where the exponential bound on the probability $P_0(\sup_{s\leq t}|Y_s|\geq 2t)$ follows from \cite[Lemmas~5.21, 5.22 and Theorem~6.28]{Barbook}. Hence it will suffice to check \eqref{eq:bound_Aeps} with $E_0^\tau [| \bar A_t^\varepsilon |^2 ]$ replaced by  $E_0^\tau [| \bar A_t^\varepsilon |^2 \indicator_{\sup_{s\leq t}|Y_s|< 2t}]$. For two configurations $\tau^1$ and $\tau^2$, by using the Cauchy-Schwarz inequality,
\begin{align*}
\lefteqn{E_0^{\tau^1} \big[\big| \bar A_t^\varepsilon \big|^2 \indicator_{\{\sup_{s\leq t}Y_s< t\}} \big] -E_0^{\tau^2} \big[\big| \bar A_t^\varepsilon \big|^2 \indicator_{\{\sup_{s\leq t}|Y_s|< 2t\}}\big]}\\
&=E_0^{\tau^1, \tau^2} \bigg[\int_0^t \big( \bar\tau^1_{Y_s}\wedge t^\varepsilon - \bar\tau^2_{Y_s}\wedge t^\varepsilon \big)  \, ds \, \big(  \bar A_t^\varepsilon(\tau^1) +  \bar A_t^\varepsilon(\tau^2) \big)\indicator_{\{\sup_{s\leq t}|Y_s|<2t\}}\bigg] \\
& \le 2 t^{1+\varepsilon} E_0^{\tau^1, \tau^2} \bigg[\int_0^t \big| \bar\tau^1_{Y_s}\wedge t^\varepsilon - \bar\tau^2_{Y_s}\wedge t^\varepsilon \big| \indicator_{\{|Y_s|< 2t\}} \, ds \,\bigg]\\
& \leq 2 t^{1+\varepsilon} \int_0^t \sum_{y\in [-2t,2t]} p^Y_s(0,y) \big| \bar\tau^1_{y}\wedge t^\varepsilon - \bar\tau^2_{y}\wedge t^\varepsilon \big|  \, ds\\
& \leq 2 t^{1+\varepsilon}  \int_0^t \sum_{y\in [-2t,2t]} p^Y_s(0,y) \big| \bar\tau^1_{y}\wedge t^\varepsilon - \bar\tau^2_{y}\wedge t^\varepsilon \big|  \, ds \\
& \leq 2 t^{1+\varepsilon}  \int_0^t  \bigg(\sum_{y\in [-2t,2t]} p^Y_s(0,y)^2 \bigg)^{1/2}  \bigg(\sum_{y\in [-2t,2t]} \big| \bar\tau^1_{y}\wedge t^\varepsilon - \bar\tau^2_{y}\wedge t^\varepsilon \big|^2 \bigg)^{1/2} \, ds  \\
& \leq 2 t^{1+\varepsilon} \, \| \bar\tau^1\wedge t^\varepsilon - \bar\tau^2\wedge t^\varepsilon\|_2 \int_0^t p^Y_{2s}(0,0)^{1/2} \, ds\\
& \leq 2 t^{7/4+\varepsilon} \, \| \bar\tau^1\wedge t^\varepsilon - \bar\tau^2\wedge t^\varepsilon\|_2,
\end{align*}
where we write $E_0^{\tau^1,\tau^2}\ldef E_0^{\tau^1} \times  E_0^{\tau^2}$ and   $\|\tau\|_2:=(\sum_{y\in[-2t,2t]} \tau_y^2)^{1/2}$. This allows us to apply Talagrand's concentration inequality (see  \cite[Theorem~6.6]{Ta96} and the argument below \cite[Equation~(4.2)]{Ta96} to replace the median by the mean) to obtain
\begin{align*}
\lefteqn{\prob\bigg[ \Big| E_0^\tau \big[\big| \bar A_t^\varepsilon \big|^2\indicator_{\sup_{s\leq t}|Y_s|< 2t} \big] -\mean\big[ E_0^\tau \big[\big| \bar A_t^\varepsilon \big|^2 \indicator_{\sup_{s\leq t}|Y_s|< 2t}\big] \big]\Big|> t^{7/4+3\varepsilon}  \bigg]}\\
&  \; \leq \;  4 \exp\big( - t^{7/2+6\varepsilon}/ (16 t^{7/2+4\varepsilon})\big) \hspace{160pt}\\
  & \; \leq\; C \exp\big(-c t^{2\varepsilon}\big).
\end{align*}
(We use \cite[Theorem~6.6]{Ta96} with the choice $f(\tau)= E_0^\tau \big[\big| \bar A_t^\varepsilon \big|^2 \indicator_{\sup_{s\leq t}|Y_s|< 2t} \big]$, $\sigma=t^{7/4+2\varepsilon}$, $X_x=t^{-\varepsilon}(\tau_x\wedge t^\varepsilon) \in [0,1]$, $N=|[-2t,2t] \cap \bbZ|$ and $b=0$.)
Hence, by the Borel-Cantelli lemma we obtain \eqref{eq:bound_Aeps}, as required.

\smallskip
\noindent
\emph{Step~2.} We next show that, for an arbitrary $\delta>0$ and $\prob$-a.e.\ $\tau$,
\begin{equation}\label{step2}
E_0^\tau\left[ \left(\bar A_t-\bar A_t^\varepsilon  \right)^{\!2}\right]\leq E_0^\tau\left[ \left( \int_0^t \bar\tau_{Y_s} \indicator_{\{ \bar\tau_{Y_s} > t^\varepsilon\}}  \right)^{\!2}\right]=O\left(t^{2+2\delta-2(\alpha-2)\varepsilon}\right).
\end{equation}
(The first inequality is obvious, and so it will suffice to derive a bound for the central expression.) To this end, we first note that if $(p_t^Y(x,y))_{x,y\in\mathbb{Z},t>0}$ is the transition density of $Y$, then the standard Gaussian bounds that hold for this (see \cite[Theorem~6.28]{Barbook}) imply that
\begin{align*}
\int_{0}^tp^Y_s(x,y)ds&\leq \int_0^tCs^{-1/2}e^{-\frac{|x-y|^2}{Cs}\wedge \frac{|x-y|}{C}}ds\\
&\leq Ce^{-\frac{|x-y|^2}{Ct}\wedge \frac{|x-y|}{C}}\int_0^ts^{-1/2}ds\\
&=2Ct^{1/2}e^{-\frac{|x-y|^2}{Ct}\wedge \frac{|x-y|}{C}}.
\end{align*}
Hence
\begin{eqnarray}
\lefteqn{E_0^\tau\left[ \left( \int_0^t \bar\tau_{Y_s} \indicator_{\{ \bar\tau_{Y_s} > t^\varepsilon\}}  \right)^{\!2}  \right]}\nonumber\\
&=&2\sum_{x,y\in\mathbb{Z}}
 \bar\tau_{x} \indicator_{\{ \bar\tau_{x} > t^\varepsilon\}}  \bar\tau_{y} \indicator_{\{ \bar\tau_{y} > t^\varepsilon\}}\int_0^t\int_0^sp^Y_r(0,y) \, p^Y_{s-r}(y,x) \,dr \, ds\nonumber\\
 &\leq&Ct\sum_{x,y\in\mathbb{Z}}
 \bar\tau_{x} \indicator_{\{ \bar\tau_{x} > t^\varepsilon\}}  \bar\tau_{y} \indicator_{\{ \bar\tau_{y} > t^\varepsilon\}}e^{-\frac{|x|^2}{Ct}\wedge \frac{|x|}{C}}e^{-\frac{|x-y|^2}{Ct}\wedge \frac{|x-y|}{C}}.\label{return}
\end{eqnarray}
To bound the sum here, we will use the following basic almost-sure bounds on the i.i.d.\ random variables in the collection $(\tau_x)_{x\in\mathbb{Z}}$. First, for each $\delta>0$, it $\mathbf{P}$-a.s.\ holds that there exists a (possibly random) constant $C$ such that
\begin{equation}\label{tausup}
\sup_{|x|\leq n}\tau_x\leq Cn^{\frac{1}{\alpha}+\delta},\qquad \forall n\in \mathbb{N}.
\end{equation}
(To check this, we simply note that 
\[\mathbf{P}\left(\sup_{|x|\leq 2^n}\tau_x>2^{n(\frac{1}{\alpha}+\delta)}\right)\leq c 2^n \mathbf{P}\left(\tau_0>2^{n(\frac{1}{\alpha}+\delta)}\right)=c2^{-n\alpha\delta},\]
from which the bound at \eqref{tausup} follows from a simple Borel-Cantelli argument.) Second, writing $N(n,m):=\#\{x:\:|x|\leq n,\:\bar\tau_{x} \geq m\}$, if $\beta>0$, then it $\mathbf{P}$-a.s.\ holds that there exists a (possibly random) constant $C$ such that
\begin{equation}\label{Nbound}
  N(n,n^\beta)\leq Cn^{1-\alpha\beta},\qquad \forall n\in \mathbb{N}.
\end{equation}
(Similarly, to check this, we observe 
\begin{align*}
\lefteqn{\mathbf{P}\left(N(2^{n+1},2^{n\beta})\geq 4\times 2^{n(1-\alpha\beta)}\right)}\\
&\leq 
\mathbf{P}\left(|N(2^{n+1},2^{n\beta})-\mathbf{E}N(2^{n+1},2^{n\beta})|\geq 2\times 2^{n(1-\alpha\beta)}\right)\\
&\leq c 2^{-2n(1-\alpha\beta)}\mathrm{Var}\left(N(2^{n+1},2^{n\beta})\right)\\
&\leq c2^{-n(1-\alpha\beta)},
\end{align*}
which again is summable, and so gives the bound at \eqref{Nbound} by a Borel-Cantelli argument.) To apply these bounds, it will be convenient to break the sum $\sum_{x,y\in\mathbb{Z}}$ at \eqref{return} into four pieces:
\[\sum_{\substack{x\in\mathbb{Z}:\\|x|\leq t^{\frac12+\delta}}}\sum_{\substack{y\in\mathbb{Z}:\\|y-x|\leq t^{\frac12+\delta}}}+
 \sum_{\substack{x\in\mathbb{Z}:\\|x|> t^{\frac12+\delta}}}\sum_{\substack{y\in\mathbb{Z}:\\|y-x|\leq t^{\frac12+\delta}}}+
 \sum_{\substack{x\in\mathbb{Z}:\\|x|\leq t^{\frac12+\delta}}}\sum_{\substack{y\in\mathbb{Z}:\\|y-x|> t^{\frac12+\delta}}}+
 \sum_{\substack{x\in\mathbb{Z}:\\|x|> t^{\frac12+\delta}}}\sum_{\substack{y\in\mathbb{Z}:\\|y-x|> t^{\frac12+\delta}}};\]
 we will call these pieces $\Sigma_1$, $\Sigma_2$, $\Sigma_3$, $\Sigma_4$, respectively. Applying \eqref{tausup} and assuming $\delta\in (0,1/2)$, for the fourth piece we have that
 \begin{eqnarray*}
 \Sigma_4&\leq &C\sum_{\substack{x\in\mathbb{Z}:\\|x|> t^{\frac12+\delta}}}|x|^{\frac{1}{\alpha}+\delta}e^{-C^{-1}|x|^{4\delta/(1+2\delta)}}\sum_{\substack{y\in\mathbb{Z}:\\|y-x|> t^{\frac12+\delta}}} |y|^{\frac{1}{\alpha}+\delta}e^{-C^{-1}|y-x|^{4\delta/(1+2\delta)}}\\
 &\leq&C\sum_{\substack{x\in\mathbb{Z}:\\|x|> t^{\frac12+\delta}}}|x|^{\frac{1}{\alpha}+\delta}e^{-C^{-1}|x|^{4\delta/(1+2\delta)}}\sum_{\substack{y\in\mathbb{Z}:\\|y|> t^{\frac12+\delta}}} \left(|y|^{\frac{1}{\alpha}+\delta}+|x|^{\frac{1}{\alpha}+\delta}\right)e^{-C^{-1}|y|^{4\delta/(1+2\delta)}}\\
 &\leq &C\left(\sum_{\substack{x\in\mathbb{Z}:\\|x|> t^{\frac12+\delta}}}|x|^{\frac{2}{\alpha}+2\delta}e^{-C^{-1}|x|^{4\delta/(1+2\delta)}}\right)^2,
 \end{eqnarray*}
and it is elementary to check that this decays to 0 faster than any polynomial as $t\rightarrow\infty$. For the third piece, we proceed similarly, applying \eqref{tausup} to deduce that
 \begin{eqnarray*}
 \Sigma_3&\leq &C\sum_{\substack{x\in\mathbb{Z}:\\|x|\leq t^{\frac12+\delta}}}|x|^{\frac{1}{\alpha}+\delta}e^{-C^{-1}|x|^{4\delta/(1+2\delta)}}\sum_{\substack{y\in\mathbb{Z}:\\|y-x|> t^{\frac12+\delta}}} |y|^{\frac{1}{\alpha}+\delta}e^{-C^{-1}|y-x|^{4\delta/(1+2\delta)}}\\
 &\leq &Ct^{\left(\frac12+\delta\right)\left(\frac1\alpha+\delta\right)}\sum_{\substack{y\in\mathbb{Z}:\\|y|> t^{\frac12+\delta}}} |y|^{\frac{1}{\alpha}+\delta}e^{-C^{-1}|y|^{4\delta/(1+2\delta)}}\\
&\leq &C\sum_{\substack{y\in\mathbb{Z}:\\|y|> t^{\frac12+\delta}}} |y|^{\frac{2}{\alpha}+2\delta}e^{-C^{-1}|y|^{4\delta/(1+2\delta)}},
\end{eqnarray*}
and again one can check that this expression converges to 0 faster than any polynomial as $t\rightarrow\infty$. Also, for the second piece, one has from a similar argument that
   \[\Sigma_2\leq C\sum_{\substack{x\in\mathbb{Z}:\\|x|> t^{\frac12+\delta}}} |x|^{\frac{2}{\alpha}+2\delta}e^{-C^{-1}|x|^{4\delta/(1+2\delta)}},\]
   which also converges to 0 faster than any polynomial as $t\rightarrow\infty$. Thus it remains to bound the first term, for which we straightforwardly have that
   \[  \Sigma_1\leq  \sum_{\substack{x\in\mathbb{Z}:\\|x|\leq t^{\frac12+\delta}}}\sum_{\substack{y\in\mathbb{Z}:\\|y-x|\leq t^{\frac12+\delta}}}\bar\tau_{x} \indicator_{\{ \bar\tau_{x} > t^\varepsilon\}}  \bar\tau_{y} \indicator_{\{ \bar\tau_{y} > t^\varepsilon\}}\leq 
  \left(\sum_{\substack{x\in\mathbb{Z}:\\|x|\leq 2t^{\frac12+\delta}}}
   \bar\tau_{x} \indicator_{\{ \bar\tau_{x} > t^\varepsilon\}}\right)^2.\]
   Now, it readily follows that
\[\sum_{\substack{x\in\mathbb{Z}:\\|x|\leq 2t^{\frac12+\delta}}}
\bar\tau_{x} \indicator_{\{ \bar\tau_{x} > t^\varepsilon\}}\leq \sum_{k=1}^\infty 
N\left( 2t^{\frac12+\delta},t^{\varepsilon k}\right)t^{(k+1)\varepsilon}.\]
Moreover, from \eqref{tausup}, when $t$ is sufficiently large, the terms in this expression are equal to 0 for $k\geq  k_0$, where $k_0:= 1+\lceil\varepsilon^{-1}(\frac12+\delta)(\frac1\alpha+\delta)\rceil$. Therefore, recalling the bound at \eqref{Nbound}, we have that
\[\sum_{\substack{x\in\mathbb{Z}:\\|x|\leq 2t^{\frac12+\delta}}}\bar\tau_{x} 
\indicator_{\{ \bar\tau_{x} > t^\varepsilon\}}\leq C\sum_{k=1}^{k_0}t^{\frac12+\delta-\alpha\varepsilon k}t^{(k+1)\varepsilon}=Ct^{\frac12+\delta+\varepsilon}\sum_{k=1}^{k_0}t^{-(\alpha-1)k\varepsilon}.\]
Since $\alpha\geq 1$, the sum is bounded by $k_0t^{-(\alpha-1)\varepsilon}$, and so we obtain (for sufficiently large $t$)
\[\sum_{\substack{x\in\mathbb{Z}:\\|x|\leq 2t^{\frac12+\delta}}}
\bar\tau_{x} \indicator_{\{ \bar\tau_{x} > t^\varepsilon\}}\leq Ct^{\frac12+\delta-(\alpha-2)\varepsilon}.\]
This implies that $\Sigma_1=O(t^{1+2\delta-2(\alpha-2)\varepsilon})$, and combining this with the decay of the other terms, we deduce from \eqref{return} that \eqref{step2} holds.

\smallskip
\noindent
\emph{Step~3.} This is the annealed version of Step 2. In particular, we aim to check that, for $\prob$-a.e.\ $\tau$,
\begin{equation}\label{step3}
\mathbf{E}\left[E_0^\tau\left[ \left( \int_0^t \bar\tau_{Y_s} \indicator_{\{ \bar\tau_{Y_s} > t^\varepsilon\}}  \right)^{\!2}\right]\right]=O\left(t^{\frac32-(\alpha-2)\varepsilon}\right)+O\left(t^{2-2(\alpha-1)\varepsilon}\right).
\end{equation}
To check this, we first take expectations at \eqref{return} to yield
\begin{align}
\lefteqn{\mathbf{E}\left[E_0^\tau\left[ \left( \int_0^t \bar\tau_{Y_s} \indicator_{\{ \bar\tau_{Y_s} > t^\varepsilon\}}  \right)^{\!2}  \right]\right]}\nonumber\\
&\leq Ct\sum_{x,y\in\mathbb{Z}}
 \mathbf{E}\left[\bar\tau_{x} \indicator_{\{ \bar\tau_{x} > t^\varepsilon\}}  \bar\tau_{y} \indicator_{\{ \bar\tau_{y} > t^\varepsilon\}}\right]e^{-\frac{|x|^2}{Ct}\wedge \frac{|x|}{C}}e^{-\frac{|x-y|^2}{Ct}\wedge \frac{|x-y|}{C}}.\label{step32}
 \end{align}
Now, an elementary calculation gives
\[\mathbf{E}\left[\bar\tau_{x} \indicator_{\{ \bar\tau_{x} > t^\varepsilon\}}  \bar\tau_{y} \indicator_{\{ \bar\tau_{y} > t^\varepsilon\}}\right]\leq \left\{
                                             \begin{array}{ll}
                                               Ct^{-(\alpha-2)\varepsilon}, & \hbox{if $x=y$;} \\
                                               Ct^{-2(\alpha-1)\varepsilon}, & \hbox{otherwise.}
                                             \end{array}
                                           \right.\]
Using these estimates and the fact that
\[\sum_{x\in\mathbb{Z}}e^{-\frac{|x|^2}{Ct}\wedge \frac{|x|}{C}}\leq Ct^{1/2},\]
we obtain \eqref{step3} from \eqref{step32}, where the two terms correspond to the sums over $x=y$ and that over $x\neq y$, respectively.

\smallskip
\noindent
\emph{Step~4.} Finally, putting together the estimates of \eqref{eq:At_annealed}, \eqref{eq:bound_Aeps}, \eqref{step2} and \eqref{step3}, we find that, for any $\delta>0$ and $\prob$-a.e.\ $\tau$, it holds that
\[E_0^\tau \Big[\big| A_t - \mean[\tau_0] t \big|^2 \Big]  =O\left(t^{7/4+3\varepsilon}\right)+O\left(t^{2+2\delta-2(\alpha-2)\varepsilon}\right).\]
As $\delta$ can be chosen arbitrarily small and $\alpha>2$, optimising over $\varepsilon$ gives an upper bound of the desired form, i.e.\
\[O\left(t^{\frac74+\frac{3}{4(2\alpha-1)}+\eta}\right),\]
for arbitrarily small $\eta$.
\end{proof}

\begin{remark}
Whilst the exponent improves from 2 to $7/4$ as $\alpha$ is increased from 2 to $\infty$, we do not obtain the bound of $t^{3/2}$ that holds in the deterministic case (i.e.\ when the $\tau_x$ are identically equal to some deterministic constant). We expect that we give something away in the exponent due to our use of Cauchy-Schwarz in Step~1 of the proof of part (ii) of the above result. It would be of interest to determine the optimal exponent for the Bouchaud trap model, and how close the bound is to $t^{3/2}$ for bounded random variables.
\end{remark}

\begin{proof}[Proof of Proposition~\ref{prop:estV}]
Set $\sigma^2 \ldef 1/ \mean[\tau_0]$. First notice that, for $\prob$-a.e. $\tau$, $A^{-1}_t \leq t$ for all $t\geq 0$. In particular,
\[P_0^\tau \Big[ \big| A_t^{-1} - \sigma^2 t \big| \geq x \Big]=0\]
for all $x\geq (1-\sigma^2)t$. Moreover, by Proposition~\ref{prop4-3}(ii) we have for $x\in (0, \sigma^2t]$,
\begin{align*}
\lefteqn{P_0^\tau \Big[ \big| A_t^{-1} - \sigma^2 t \big| \geq x \Big]}\\
&=P_0^\tau \Big[  A_t^{-1} - \sigma^2 t  \geq x \Big] +P_0^\tau \Big[  A_t^{-1} - \sigma^2 t  \leq -x \Big] \\
&=P_0^\tau \Big[  A_{\sigma^2t +x} \leq t  \Big]+P_0^\tau \Big[  A_{\sigma^2t -x} \geq t  \Big] \\
&\leq P_0^\tau \Big[ \big| A_{\sigma^2 t +x} - \sigma^{-2} (\sigma^2 t +x) \big| \geq x/\sigma^2   \Big]+P_0^\tau \Big[ \big| A_{\sigma^2t -x} - \sigma^{-2} (\sigma^2 t -x) \big| \geq x/\sigma^2   \Big]\\
& \leq\frac{\sigma^4}{x^2} \bigg( E_0^\tau \Big[  \big| A_{\sigma^2t +x} - \sigma^{-2} (\sigma^2 t +x) \big|^2  \Big]+E_0^\tau \Big[  \big| A_{\sigma^2t -x} - \sigma^{-2} (\sigma^2 t -x) \big|^2  \Big]
\bigg)\\
& \leq\frac{c \sigma^4}{x^2}  \big( \sigma^2t+x \big)^{\frac74+\frac{3}{4(2\alpha-1)}+\eta},
\end{align*}
and similarly, for $x\geq \sigma^2t$,
\[P_0^\tau \Big[ \big| A_t^{-1} - \sigma^2 t \big| \geq x \Big]=P_0^\tau \Big[  A_{\sigma^2t+x} \leq t  \Big]  \leq\frac{c \sigma^4}{x^2}  \big( \sigma^2t+x \big)^{\frac74+\frac{3}{4(2\alpha-1)}+\eta}.\]
Hence,
\begin{align} \label{eq:L2_Ainv}
E_0^\tau \Big[\big| A^{-1}_t - \sigma^2 t \big|^2 \Big]
&= \int_0^{\infty}P_0^\tau \Big[ \big| A_t^{-1} - \sigma^2 t \big|^2 \geq x \Big] \, dx \nonumber \\
& \leq\int_0^{(1-\sigma^2)^2t^2}P_0^\tau \Big[ \big| A_t^{-1} - \sigma^2 t \big| \geq \sqrt{x} \Big] \, dx \nonumber \\
& \leq\int_0^{(1-\sigma^2)^2t^2} \bigg( 1 \wedge \frac{c \sigma^4}{x}  \big( \sigma^2t+\sqrt{x} \big)^{\frac74+\frac{3}{4(2\alpha-1)}+\eta}  \bigg) \, dx   \nonumber \\
& =O\left( t^{\frac74+\frac{3}{4(2\alpha-1)}+\eta} \log t\right).
\end{align}
In order to show the statement, recall that $\langle X\rangle_t = \langle Y \rangle_{A^{-1}_t}$, where $\langle Y \rangle_t$ equals the number of jumps of $Y$ up to time $t$. In particular, $(\langle Y\rangle_t)_{t\geq 0}$ is a Poisson process with rate $1$ and $(\langle Y\rangle_t - t)_{t\geq 0}$ is martingale. Thus, by decomposing $\langle X \rangle_t$ as $\langle X \rangle_t=A_t^{-1}+ \langle Y \rangle_{A^{-1}_t}- A_t^{-1}$,  we get
\begin{align*}
E_0^\tau \bigg[\Big| \frac{\langle X \rangle_t} t -\sigma^2 \Big|^2 \bigg]&\leq2 \bigg( E_0^\tau \bigg[\Big| \frac{A_t^{-1}} t -\sigma^2 \Big|^2 \bigg]+ t^{-2} E_0^\tau \Big[\big( \langle Y \rangle_{A_t^{-1}} - A^{-1}_t \big)^2 \Big] \bigg) \\
& \leq O\left( t^{-\frac14+\frac{3}{4(2\alpha-1)}+\eta} \log t\right) + t^{-2} E_0^\tau \Big[\sup_{s\leq t} \big(  \langle Y \rangle_{s} - s \big)^2 \Big],
\end{align*}
where we used \eqref{eq:L2_Ainv} and again that $A^{-1}_t \leq t$. Using Doob's inequality and observing that  $E_0^\tau \big[\big(  \langle Y \rangle_{t} - t \big)^2 \big]$ equals the variance of a Poisson distributed random variable with parameter $t$, we can bound the second term from above as
\begin{align*}
t^{-2} E_0^\tau \Big[\sup_{s\leq t} \big(  \langle Y \rangle_{s} - s \big)^2 \Big]  \leq c t^{-2} E_0^\tau \Big[\big(  \langle Y \rangle_{t} - t \big)^2 \Big]  =O\left( t^{-1}\right).
\end{align*}
Since $\eta>0$ is arbitrary, this shows (ii). Part~(i) follows from Proposition~\ref{prop4-3}(i) by the same arguments.
\end{proof}

We next control the jumps of $X$.

\begin{prop} \label{prop:estJ}
For any $n\in \bbN$  and $\bbP$-a.e.\ $\tau$,
\[ E_0^\tau \Big[ \sum_{0\leq s \leq t} \big|X_s - X_{s-}\big|^n  \Big] \; \leq \; t, \qquad \forall t>0.\]
\end{prop}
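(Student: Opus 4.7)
The plan is quite short, because the result is essentially immediate once one unpacks the structure of $X$. The key observation is that $X$ is a nearest-neighbour Markov chain on $\bbZ$ (its generator \eqref{eq:geneBTM} only connects sites at graph distance $1$), so every jump has size exactly $1$. Consequently $|X_s - X_{s-}|^n \in \{0,1\}$ for every $n\in\bbN$, and the sum
\[\sum_{0\leq s\leq t} \big|X_s - X_{s-}\big|^n \;=\; \#\big\{s\in(0,t]: X_s\neq X_{s-}\big\}\]
is simply the total number of jumps of $X$ on $[0,t]$, independently of $n$.

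Next, I would invoke the time-change representation used already in Section~\ref{sec:qh}, namely $X_t = Y_{A_t^{-1}}$ with $Y$ the continuous-time simple random walk on $\bbZ$ with unit jump rates and $A_t = \int_0^t \tau_{Y_s}\,ds$. Since $Y$ and $X$ visit the same sequence of states, the number of jumps of $X$ on $[0,t]$ equals the number of jumps of $Y$ on $[0,A_t^{-1}]$, i.e.\ $\langle Y\rangle_{A_t^{-1}}$.

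Finally, under $\prob$ the environment satisfies $\tau_x\geq 1$ for every $x\in\bbZ$ almost surely (by \eqref{tautail}), hence $\prob$-a.s.\ $A_t \geq t$ for every $t\geq 0$, and inverting gives $A_t^{-1}\leq t$. Monotonicity of $\langle Y\rangle$ together with the fact that $(\langle Y\rangle_t)_{t\geq 0}$ is a Poisson process of rate $1$ under $E_0^\tau$ then yields
\[E_0^\tau\!\left[\sum_{0\leq s\leq t}\big|X_s - X_{s-}\big|^n\right] = E_0^\tau\!\left[\langle Y\rangle_{A_t^{-1}}\right] \leq E_0^\tau\!\left[\langle Y\rangle_t\right] = t,\]
as required. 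I do not anticipate any real obstacle here; the whole argument is reading off properties that have already been set up (nearest-neighbour structure, time-change, $\tau_x\geq 1$), and no probabilistic or analytic estimate beyond monotonicity and the mean of a Poisson random variable is needed.
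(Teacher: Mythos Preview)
Your argument is correct. The paper takes a different route: it invokes the L\'evy system formula to write
\[E_0^\tau\Big[\sum_{0\leq s\leq t} f(X_{s-},X_s)\Big] = E_0^\tau\Big[\int_0^t \frac{1}{2\tau_{X_{s-}}}\sum_{y\sim X_{s-}} f(X_{s-},y)\,ds\Big]\]
with $f(x,y)=1\wedge|x-y|^n$, and then bounds the integrand by $1$ using $\tau_x\geq 1$. Your approach is more elementary in that it avoids the L\'evy system machinery altogether, relying instead on the time-change representation $X_t=Y_{A_t^{-1}}$ already set up in the section: since all jumps have size $1$, the sum is just the jump count $\langle Y\rangle_{A_t^{-1}}$, and the pathwise bound $A_t^{-1}\leq t$ (from $\tau_x\geq 1$) together with monotonicity and $E_0^\tau[\langle Y\rangle_t]=t$ finishes things. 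The L\'evy system argument, on the other hand, would adapt more directly to models with non-nearest-neighbour jumps, where your reduction to a pure jump count would no longer apply.
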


\begin{proof}
According to the L\'evy system formula
(see e.g.\ \cite[Lemma 4.7]{CK03}, 
\cite[Theorem VI.28.1]{RW00} or \cite{BJ73}), for any continuous bounded non-negative function $f:\bbZ^d \times \bbZ^d \to \bbR$ that vanishes on the diagonal,  for $\bbP$-a.e.\ $\tau$,
\[
E_0^\tau \bigg[ \sum_{0\leq s\leq t} f(X_{s-},X_s) \bigg] = E_0^\tau \bigg[ \int_{(0,t]}  \frac 1 {2 \tau_{X_{s-}}}  \sum_{y \sim X_{s-}}  f(X_{s-},y) \, ds \bigg].
\]
Then choosing $ f(x,y)= 1\wedge \big| x-y \big|^n$ and recalling that $\tau_x\geq 1 $ for all $x$, we obtain
\[E_0^\tau \Big[ \frac 1 t \sum_{0\leq s\leq t} \big| X_s- X_{s-} \big|^n \Big]=
\frac 1 t \int_0^t ds \, E^\tau_0\Big[  \frac 1 {2 \tau_{X_{s-}}} \sum_{y\sim X_{s-} }
  \big|  y -   X_{s-} \big|^n \Big]  \leq 1,\]
which finishes the proof.
\end{proof}

\begin{proof} [Proof of Theorem~\ref{thm:be}]
We shall apply the general result of Theorem~\ref{thm:be_hb} with the choice $n=2$ on the martingale
\[ N_s \; \ldef \; \frac{X_{st}}{\sqrt t \, \sigma}, \qquad 0\leq s \leq 1.\]
Since
\[ \langle N \rangle_1-1 \;=\; \frac{\langle X \rangle_t}{t \, \sigma^2}-1\; =\;\frac 1 {\sigma^2} \bigg( \frac{\langle X \rangle_t}{t}- \sigma^2 \bigg),\]
we get by Proposition~\ref{prop:estV} that
\begin{align} \label{eq:estMtilde1}
\mean \Big[ E_0^\tau \big[ | \langle N \rangle_1 -1|^{2} \big] \Big]
 =O\left(t^{-1/2} \log t\right),    \:\:    E_0^\tau \big[ | \langle N \rangle_1 -1|^{2} \big] =O\left( t^{- \frac 14+\frac{3}{4(2\alpha-1)}+\eta}\right).
\end{align}
Furthermore,
\begin{align*}
 \sum_{s\leq 1} \big| \Delta N_s\big|^{4} \; = \; (t\sigma^2)^{-2} \sum_{s\leq t }  \big|X_s - X_{s-}\big|^{4},
\end{align*}
and we obtain from Proposition~\ref{prop:estJ} that
\begin{align} \label{eq:estMtilde3}
  E_0^\tau \Big[ \sum_{0\leq s \leq 1} \big| \Delta N_s\big|^{4} \Big] \; \leq t^{-1}.
\end{align}
Now we apply Theorem~\ref{thm:be_hb}, under the annealed measure $\bbP$, which gives
\begin{align*}
 & \sup_{x \in \bbR} \left|  \bbP \big[X_t \leq  x \sqrt{t} \ \big]  - \Phi(\tfrac x {\sigma})\right| \;= \;
\sup_{x \in \bbR} \left| \bbP\big[ N_1 \leq  x \ \big]  - \Phi(x)\right| \\
& \mspace{36mu} \leq \; c\Bigg( \mean \bigg[ E_0^\tau \Big[ \big| \langle N \rangle_1 -1\big|^{2} \Big] \bigg]+  \mean \bigg[ E_0^\tau \Big[ \sum_{0\leq s \leq 1} \big| \Delta N_s\big|^{4} \Big] \bigg]\Bigg)^{\frac 1 5},
\end{align*}
and an analogous estimate holds under  the quenched measure $P_0^\tau$. Hence, the claim follows from \eqref{eq:estMtilde1} and \eqref{eq:estMtilde3}.
\end{proof}

\subsection{Quantitative local CLT}

In this section, we derive the quantitative quenched local limit result of Theorem~\ref{thm:QLCLT}. We first give a general statement of a quantitative local CLT for BTM on a class of metric measure spaces and then prove it applies in our case. The approach is quite generic and will apply to other stochastic processes and settings, but we do not attempt to give the broadest presentation of the argument here, so as to not obscure the main ideas.

Let $(M,d,\mu)$ be a metric measure space, where $M$ is a locally compact metric space that is also a vector space, $d$ be a metric on $M$ and $\mu$ be a Radon measure on $M$ of full support. Let $0\in M$ be the origin of $M$ as the vector space. Let $G\subset M$ be the vertices of a locally finite, connected graph and for $n\in \mathbb Z$, let $G_n:= n^{-1}G$. Suppose $G$ consists of `cells' $\{V^{(i)}\}_{i\in \Lambda}$ such that $|V^{(i)}|=L$ for some $L\ge 2$ and $\cup_{i\in \Lambda}V^{(i)}=G$, and moreover, $M$ can be decomposed into \lq complexes' $\{K^{(i)}\}_{i\in \Lambda}$, where $K^{(i)}:=\mbox{Conv} (V^{(i)})$, with $\mbox{Conv}\,(H)$ being the convex hull of $H$. We assume that $\sup_{i\in \Lambda}\,\mbox{diam}\,K^{(i)}<\infty$, $\cup_{i\in \Lambda}K^{(i)}=M$, and there exists a $d_f\geq 1$ such that $\mu (n^{-1}K^{(i)})=n^{-d_f}\mu (K^{(i)})$ for all $i\in \Lambda$ and $n\in \mathbb Z$. In the following, we write $V^{(i)}_n:=n^{-1}V^{(i)}$ and $K^{(i)}_n:=n^{-1}K^{(i)}$. Concerning the dynamics on $M$, we assume that there exists a diffusion process $\{B_t\}_t$ on $M$ that has transition density $p_{\mathrm{BM}}(t,x,y)$ (with respect to $\mu$) enjoying the following estimates for all $t\in (0,\infty)$, $x,y\in M$,
\begin{eqnarray}
c_3t^{-d_f/d_w}\exp\Big(-c_4\Big(\frac{d(x,y)^{d_w}}t\Big)^{1/(d_w-1)}\Big)
\le p_{\mathrm{BM}}(t,x,y) \nonumber\\
~~~~~~\le c_5t^{-d_f/d_w}\exp\Big(-c_6\Big(\frac{d(x,y)^{d_w}}t\Big)^{1/(d_w-1)}\Big),
\label{subGS}
\end{eqnarray}
where $d_w\geq 2$. We write the Markov process on $G$ corresponding to \eqref{eq:geneBTM} as $\{X_t\}_t$ (where $x\sim y$ means $x$ and $y$ are connected by a bond), and write its transition density, defined as at \eqref{qtd}, as $p_t^\tau(x,y)$. Fix $h\in(0,\infty)$, $a>1$ and $\frac12<\eta$. For $N\in\mathbb{N}$, set
\[\mathcal{I}_N:=\left\{K^{(i)}_{a^{-\lfloor {\eta N}\rfloor}}: i\in \Lambda,\:K^{(i)}_{a^{-\lfloor {\eta N}\rfloor}}\cap B(0, h a^{N+1})\neq \emptyset\right\},\]
where $B(0,r)$ is a ball centered at $0$ with radius $r$ with respect to the distance $d$. For $x\in  B(0, h)$ and $n\in[a^N,a^{N+1})$, let $I_x^n$ be an element in $\mathcal{I}_N$ containing
$nx$. For $I\in\mathcal{I}_N$, define $V(I):=\sum_{y\in I}\tau_y$. The following assumption captures the key estimates we need to deduce a quantitative local limit theorem.

\begin{assumption}\label{assumpLCLT}
There exists $\theta\in (0, d_f/(2d_w))$ such that for each $\frac{d_f}{2}<\kappa<\eta d_f<d_f$, $h>0$ and $0<T_1<T_2<\infty$, the following hold $\prob\mbox{-a.s.}$:
\begin{align}
\sup_{I\in\mathcal{I}_N}\left|V(I)-\mu(I)\right|&\leq C(\tau) a^{\kappa N},\label{ass1-1}\\
\sup_{\substack{|x|<h,\\t\in[T_1,T_2],\\n\in[a^N,a^{N+1})}}\left| P^\tau_0\big[X_{n^{d_w}t} \in I_x^n\big] - \int_{n^{-1}I_x^n}p_{\mathrm{BM}}(t, 0,y) \, dy \right|&\leq  C(\tau)a^{-d_w\theta N},\label{ass1-2}\\
\sup_{\substack{|x|<h:\:nx\in G,\\t\in[T_1,T_2],\\n\in[a^N,a^{N+1})}} n^{d_f}\sup_{y\in I_x^n}\left| p^\tau_{n^{d_w}t}(0,y)- p^\tau_{n^{d_w}t}(0,nx)\right|
&\leq  C(\tau) a^{\frac12N(\eta-1)},\label{ass1-3}\\
\sup_{t\in[T_1,T_2]}\left| p_{\mathrm{BM}}(t, 0,y)- p_{\mathrm{BM}}(t, 0,x)\right| &\leq  Cd(x,y)^{\frac 12}.\label{ass1-4}
\end{align}
\end{assumption}

\begin{remark}
Note that, since $\mu$ has full support, it follows from \eqref{ass1-1} that for every $x\in M$, there exists a sequence $x_n\in G_n$, $n\geq 1$, such that $d(x_n,x)\rightarrow 0$.
\end{remark}

\begin{theorem}\label{thm:quant_LCLT*}
Under Assumption \ref{assumpLCLT}, it holds that for all $h>0$ and $0<T_1\leq T_2$ and any $\varepsilon>0$,
\begin{align*}
\lim_{n\to  \infty} n^{\frac{d_w\theta}{1+2d_f}-\varepsilon} \sup_{|x|\leq h:\:nx\in G} \sup_{t\in[T_1,T_2]}\big| n^{d_f} p^\tau_{n^{d_w}t}(0, nx)-p_{\mathrm{BM}}(t, 0,x) \big| =0,\qquad\prob\mbox{-a.s.}
\end{align*}
\end{theorem}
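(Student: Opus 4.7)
The plan is to compare $n^{d_f}p^\tau_{n^{d_w}t}(0,nx)$ to $p_{\mathrm{BM}}(t,0,x)$ by averaging both sides over the mesoscopic complex $I^n_x$ (whose $\mu$-diameter after rescaling by $n^{-1}$ is $\asymp a^{(\eta-1)N}$, with $n\asymp a^N$), thereby trading the pointwise heat-kernel comparison for an integrated comparison to which the quantitative invariance principle \eqref{ass1-2} applies. The key identity is
\[P^\tau_0\big[X_{n^{d_w}t}\in I^n_x\big] \;=\; \sum_{y\in I^n_x\cap G} p^\tau_{n^{d_w}t}(0,y)\,\tau_y;\]
the equicontinuity bound \eqref{ass1-3} lets us replace $p^\tau_{n^{d_w}t}(0,y)$ by $p^\tau_{n^{d_w}t}(0,nx)$, \eqref{ass1-2} compares the resulting probability to $\int_{n^{-1}I^n_x} p_{\mathrm{BM}}(t,0,y)\,dy$, and the H\"older estimate \eqref{ass1-4} compares that integral to $p_{\mathrm{BM}}(t,0,x)\,\mu(n^{-1}I^n_x)$. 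The volume fluctuation \eqref{ass1-1} controls the ratio $\mu(I^n_x)/V(I^n_x)$, and the scaling identity $\mu(n^{-1}I^n_x) = n^{-d_f}\mu(I^n_x)$ cancels the normalisations. The scale $\eta$ is a free parameter to be optimised at the end.

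Concretely, writing $\bar q \ldef \mu(n^{-1}I^n_x)^{-1}\int_{n^{-1}I^n_x} p_{\mathrm{BM}}(t,0,y)\,dy$, I would split the error as $n^{d_f}p^\tau_{n^{d_w}t}(0,nx) - p_{\mathrm{BM}}(t,0,x) = A_1+A_2+A_3$ with
\begin{align*}
A_1 &\ldef n^{d_f}\bigg(p^\tau_{n^{d_w}t}(0,nx) - \frac{P^\tau_0[X_{n^{d_w}t}\in I^n_x]}{V(I^n_x)}\bigg),\\
A_2 &\ldef \frac{n^{d_f}\big(P^\tau_0[X_{n^{d_w}t}\in I^n_x] - \int_{n^{-1}I^n_x} p_{\mathrm{BM}}(t,0,y)\,dy\big)}{V(I^n_x)} + \frac{\mu(I^n_x)-V(I^n_x)}{V(I^n_x)}\,\bar q,\\
A_3 &\ldef \bar q - p_{\mathrm{BM}}(t,0,x),
\end{align*}
the decomposition following from $\mu(n^{-1}I^n_x) = n^{-d_f}\mu(I^n_x)$. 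Using $V(I^n_x) \asymp \mu(I^n_x) \asymp a^{\eta d_f N}$ (a consequence of \eqref{ass1-1} combined with $\kappa<\eta d_f$) together with $|\bar q|\leq C$ (from the Gaussian upper estimate of \eqref{subGS}), the four parts of Assumption~\ref{assumpLCLT} then yield, uniformly over $|x|\leq h$ with $nx\in G$, $t\in[T_1,T_2]$ and $n\in[a^N,a^{N+1})$,
\[|A_1|\leq C(\tau)\,a^{\frac12 N(\eta-1)},\qquad |A_3|\leq C\,a^{\frac12 N(\eta-1)},\]
\[|A_2|\leq C(\tau)\,a^{(\kappa-\eta d_f)N} + C(\tau)\,a^{(d_f(1-\eta)-d_w\theta)N}.\]

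Converting to $n\asymp a^N$, the total error decays as $n^{-\rho}$ with $\rho = \min\{(1-\eta)/2,\,\eta d_f-\kappa,\,d_w\theta-d_f(1-\eta)\}$. Taking $\kappa$ arbitrarily close to $d_f/2$ from above and setting $\eta_\star \ldef 1 - 2d_w\theta/(1+2d_f)$ equalises the first and third rates at $d_w\theta/(1+2d_f)$; the standing hypothesis $\theta<d_f/(2d_w)$ is precisely what forces $\eta_\star > (d_f+1)/(2d_f+1)$, which makes the middle rate $\eta_\star d_f - d_f/2$ strictly larger and hence non-binding. This yields $|n^{d_f}p^\tau_{n^{d_w}t}(0,nx)-p_{\mathrm{BM}}(t,0,x)|\leq C(\tau)\,n^{-d_w\theta/(1+2d_f)+\varepsilon}$ for arbitrarily small $\varepsilon>0$ (absorbing the choice of $\kappa$), implying the stated limit. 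The main obstacle is exactly this scale optimisation: the heat-kernel oscillation error pushes $\eta$ towards $1$ while the normalised quantitative invariance-principle error $\sim n^{d_f(1-\eta)-d_w\theta}$ pushes $\eta$ away from $1$, and the borderline balance can be struck only for $\theta$ in the range permitted by Assumption~\ref{assumpLCLT}. Once $\eta_\star$ is identified, the remainder is routine bookkeeping.
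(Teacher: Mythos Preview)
Your proof is correct and follows essentially the same route as the paper's. Your terms $A_1$, $A_3$, and the two summands of $A_2$ are exactly the paper's $\Upsilon_1$, $\Upsilon_4$, $\Upsilon_2$, $\Upsilon_3$ respectively, with the same bounds from the four parts of Assumption~\ref{assumpLCLT}; your optimal choice $\eta_\star = 1 - 2d_w\theta/(1+2d_f)$ coincides with the paper's $\eta = (1+2d_f-2d_w\theta)/(1+2d_f)$, and your observation that $\theta<d_f/(2d_w)$ is what makes the volume-fluctuation rate $\eta_\star d_f - d_f/2$ non-binding is the content of the paper's ``elementary exercise''.
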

\begin{remark}
If $G_n:= b^{-n}G$ for $b>1$ and Assumption \ref{assumpLCLT} holds with $b^n$ instead of $n$, then Theorem~\ref{thm:quant_LCLT*} holds with $b^n$ instead of  $n$.
\end{remark}

\begin{proof}
Suppose $n\in [a^N,a^{N+1})$ and $x\in B(0,h)\cap n^{-1}G$. It then holds that
\begin{align*}
\lefteqn{\big|n^{d_f} p^\tau_{n^{d_w}t}(0, nx)-p_{\mathrm{BM}}(t, 0,x) \big|}\\
&\leq \Big|n^{d_f} p^\tau_{n^{d_w}t}(0, nx)-  \frac{n^{d_f}P^\tau_0[X_{n^{d_w}t} \in I_x^n]}{V(I_x^n)} \Big| \\
& \qquad+\Big| \frac{n^{d_f}}{V(I_x^n)}  \Big( P^\tau_0\big[X_{n^{d_w}t} \in I_x^n\big] - \int_{n^{-1}I_x^n} p_{\mathrm{BM}}(t, 0,y) \, dy \Big) \Big| \\
&\qquad + \frac{1}{n^{-d_f}\mu(I_x^n)}\int_{n^{-1}I_x^n} p_{\mathrm{BM}}(t, 0,y) \, \mu(dy) \cdot \Big|1 - \frac{\mu(I_x^n)}{V(I_x^n)} \Big| \\
& \qquad+\Big| \frac 1 {n^{-d_f}\mu(I_x^n)}   \int_{n^{-1}I_x^n} p_{\mathrm{BM}}(t, 0,y) \, \mu(dy)-p_{\mathrm{BM}}(t, 0,x) \Big|\\
&=:\Upsilon_1+\Upsilon_2+\Upsilon_3+\Upsilon_4.
\end{align*}
By \eqref{ass1-4},
\begin{eqnarray*}
\sup_{t\in[T_1,T_2]}\sup_{|x|<h}\Upsilon_4&\leq& \sup_{t\in[T_1,T_2]}\sup_{|x|<h}\sup_{y\in n^{-1}I_x^n}\left| p_{\mathrm{BM}}
(t, 0,y)- p_{\mathrm{BM}}(t, 0,x)\right|\\
&\leq &Cn^{-1/2} a^{\lfloor\eta N\rfloor/2}\\
&\leq &Cn^{\frac12(\eta-1)}.
\end{eqnarray*}
Similarly, by \eqref{ass1-3}, we have that
\begin{eqnarray*}
\sup_{\substack{|x|<h:\:nx\in G,\\t\in[T_1,T_2]}}\Upsilon_1&\leq&\sup_{\substack{|x|<h:\:nx\in G,\\t\in[T_1,T_2]}} n^{d_f}\sup_{y\in I_x^n}\left| p^\tau_{n^{d_w}t}(0,y)- p^\tau_{n^{d_w}t}(0,nx)\right|\leq Cn^{\frac12(\eta-1)}.
\end{eqnarray*}
As for $\Upsilon_2$, from \eqref{ass1-2}, we deduce that
\[\sup_{\substack{|x|<h,\\t\in[T_1,T_2]}}\left| P_0\big[X_{n^{d_w}t} \in I_x^n\big] - \int_{n^{-1}I_x^n} p_{\mathrm{BM}}(t, 0,y) \, \mu(dy) \right|\leq Cn^{-{d_w}\theta}.\]
From the volume bound \eqref{ass1-1}, we also have that
\[\sup_{|x|<h}\frac{n^{d_f}}{V(I_x^n)}\leq \frac{n^{d_f}}{a^{\lfloor\eta N\rfloor d_f}-a^{\kappa N}}\leq Cn^{d_f(1-\eta)}.\]
Combining the two previous bounds yields
\[\sup_{\substack{|x|<h,\\t\in[T_1,T_2]}}\Upsilon_2\leq  Cn^{d_f(1-\eta)-d_w\theta}.\]
Moreover, in the third term, $\Upsilon_3$, we have that
\[\sup_{\substack{|x|<h,\\t\in[T_1,T_2]}}\frac{1}{n^{-d_f}\mu(I_x^n)}\int_{n^{-1}I_x^n} p_{\mathrm{BM}}(t, 0,y) \, dy \leq\sup_{x\in M,\:t\in[T_1,T_2]} p_{\mathrm{BM}}(t, x,x)\leq C,\]
and also, again from \eqref{ass1-1},
\[\sup_{|x|<h}\Big|1 - \frac{\mu(I_x^n)}{V(I_x^n)} \Big|\leq Cn^{\kappa-\eta d_f}, \]
hence
\[\Upsilon_3\leq  Cn^{\kappa-\eta d_f}.\]
Putting these pieces together, we have established that
\begin{align*}
\lefteqn{\sup_{|x|\leq h:\:nx\in G} \sup_{t\in[T_1,T_2]}\big| n^{d_f} p^\omega_{n^{d_w}t}(0, nx)-p_{\mathrm{BM}}(t, 0,x) \big|}\\
&\leq C\left(n^{\frac12(\eta-1)}+n^{d_f(1-\eta)-d_w\theta}+n^{\kappa-\eta d_f}\right).
\end{align*}
Using that $\theta<d_f/(2d_w)$, it is an elementary exercise to check that this bound is optimised over the $\kappa$ and $\eta$ satisfying the constraints of Assumption \ref{assumpLCLT} by taking $\kappa$ suitably close to $d_f/2$ and $\eta=\frac{1+2d_f-2d_w\theta}{1+2d_f}$. With this choice, we obtain the desired result.
\end{proof}

We now apply the above result to our case. In particular, we let $M$ be given by $\mathbb R$ equipped with the Euclidean metric, $\mu$ be Lebesgue measure multiplied by $\mathbf{E}(\tau_0)$, $G=\mathbb Z$, $K^{(i)}$ be the unit interval $[i,i+1]$ and $V^{(i)}$ is its boundary, and $B$ be Brownian motion on $\mathbb{R}$, slowed down by a factor $\mathbf{E}(\tau_0)$. Clearly, in this setting, \eqref{subGS} is satisfied with $d_f=1$, $d_w=2$. Moreover, taking $a=2$ and $x_i:=i\lfloor 2^{\eta N}\rfloor$, $\mathcal{I}_N$ is given by
\[\mathcal{I}_N:=\left\{[x_{i-1},x_i]:\:x_i\geq -h2^{N+1},\:x_{i-1}\leq h2^{N+1}\right\}.\]

\begin{proof}[Proof of Theorem~\ref{thm:QLCLT}] It is enough to check Assumption \ref{assumpLCLT}. We first prove \eqref{ass1-1}. By a union bound, we have (for sufficiently large $N$) that
\begin{eqnarray*}
\mathbf{P}\left(\sup_{I\in\mathcal{I}_N}\left|V(I)-\mu(I)-\mathbf{E}(\tau_0)\right|> a^{\kappa N}\right)
&\leq &\left|\mathcal{I}_N\right|\frac{\mathrm{Var}\left(V(I)\right)}{a^{2\kappa N}}\\
&\leq & \frac{C a^N}{a^{\lfloor {\eta N}\rfloor}}\times
\frac{a^{\lfloor {\eta N}\rfloor}\mathrm{Var}\left(\tau_0\right)}{a^{2\kappa N}}\\
&=& Ca^{(1-2\kappa)N}.
\end{eqnarray*}
Thus the result is a simple application of the Borel-Cantelli lemma. The assumption at \eqref{ass1-2} is a consequence of Theorem~\ref{thm:be}\,(ii), where we may take $\theta$ as in that result. The bound at \eqref{ass1-3} follows from \eqref{eq:holder-HK}, and \eqref{ass1-4} is a standard estimate for $1$-dimensional Brownian motion (which can be proved similarly to \eqref{eq:holder-HK}). Hence, with some adjustment of the constants, we obtain the theorem.
\end{proof}

\appendix

\section{Tail bounds for the volume of balls in the BTM}\label{appa}

In this section, we present tail bounds for $S_n$, which imply corresponding bounds on the volume of balls in the Bouchaud trap model, as will be needed in the proof of Theorem~\ref{thm:AHK}. The results are similar to ones established in \cite{BC,Cline,Heyde,MN,Nag}, for example, but for the convenience of the reader we present self-contained proofs. The proof of part (ii) in particular is modelled on the argument of \cite[Proposition~2.1]{BC}. The definition of $v_\alpha$ should be recalled from \eqref{vdef}.

\begin{prop}\label{prop:tail}
\begin{enumerate}
  \item[(i)] For any $\alpha>0$, it holds that: for $n\in\mathbb{N}$ and $\lambda\geq 1$,
\[\prob\left(S_n\leq \lambda^{-1}v_\alpha(n)\right)\leq Ce^{-c\lambda^\gamma},\]
where we can take $\gamma=\min\{1,\alpha\}$.
  \item[(ii)] For any $\alpha>0$, it holds that: for $n\in\mathbb{N}$ and $\lambda\geq 1$,
\[\prob\left(S_n\geq \lambda v_\alpha(n)\right)\leq C\lambda^{-\rho},\]
where we can take $\rho$ to be any number strictly smaller that $\alpha$ (and the constant $C$ depends on $\rho$).
\end{enumerate}
\end{prop}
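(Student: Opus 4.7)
The plan is to treat the two parts by different classical routes.

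\textbf{Part (i).} My approach will be the exponential Chebyshev inequality, in the form
\[\prob(S_n \leq t) \;\leq\; e^{st} L(s)^n \;\leq\; \exp\bigl(st - n(1-L(s))\bigr), \qquad s > 0,\]
where $L(s) := \mean[e^{-sX_1}]$. Since $X_i \geq c_F^{1/\alpha}$ almost surely, the deterministic bound $S_n \geq n c_F^{1/\alpha}$ immediately disposes of $\alpha > 1$ (where $v_\alpha(n) = n$, so the event is either empty, or forces $\lambda$ into a bounded range that can be absorbed into the constant $C$). In the remaining cases $\alpha \leq 1$, the first step is to derive, by splitting the integral $1 - L(s) = \int(1-e^{-sx})\,dF(x)$ at $x = 1/s$ and using $1-e^{-u} \geq u/2$ for $u \leq 1$, the lower bounds $1 - L(s) \geq cs^\alpha$ when $\alpha \in (0,1)$ and $1 - L(s) \geq cs\log(1/s)$ when $\alpha = 1$, valid for small $s$. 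Plugging $t = \lambda^{-1}v_\alpha(n)$ into the exponential Chebyshev inequality and optimising over $s$ then produces Chernoff exponents of order $-\lambda^{\alpha/(1-\alpha)}$ for $\alpha \in (0,1)$ (which implies the claimed $e^{-c\lambda^\alpha}$, since $\alpha/(1-\alpha) > \alpha$ for $\lambda \geq 1$) and of order $-n^{1-1/(c\lambda)}$ for $\alpha = 1$; the latter dominates $-c'\lambda$ throughout the non-trivial range $1 \leq \lambda < \log n$ (noting $\log \lambda = o(\log n)$ there, and with bounded $n$ absorbed into the constant).

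\textbf{Part (ii).} My plan is the classical one-big-jump truncation argument, modelled on \cite[Proposition~2.1]{BC}. Setting $M := \lambda v_\alpha(n)$ and $Y_i := X_i \indicator_{\{X_i \leq M\}}$, I would decompose
\[\prob\bigl(S_n \geq \lambda v_\alpha(n)\bigr) \;\leq\; nM^{-\alpha} \;+\; \prob\Bigl(\textstyle\sum_{i=1}^n Y_i \geq \lambda v_\alpha(n)\Bigr).\]
The union-bound term equals $\lambda^{-\alpha} \cdot n/v_\alpha(n)^\alpha$, and an inspection of the three regimes for $v_\alpha$ shows this is at most $\lambda^{-\alpha}$. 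For the truncated sum, when $\alpha \in (0,1)$ a direct Markov estimate using $\mean[Y_i] \asymp M^{1-\alpha}$ gives $n\mean[Y_i]/(\lambda v_\alpha(n)) \asymp \lambda^{-\alpha}$. For $\alpha \geq 1$, given a target $\rho \in (0,\alpha)$, I would select $p \in (\rho \vee 1, \alpha)$; once $\lambda$ exceeds a constant depending on $\rho$, one has $n\mean[Y_i] \leq \tfrac{1}{2}\lambda v_\alpha(n)$, and Markov applied to $|\sum_i (Y_i - \mean[Y_i])|^p$, with the $p$th moment controlled via the von Bahr--Esseen inequality (for $1 < p \leq 2$) or Rosenthal's inequality (for $p > 2$), yields a bound $C_p\lambda^{-p} \leq C_p \lambda^{-\rho}$, the $n$ factor from $n\mean[|Y_i|^p]$ being absorbed into $(\lambda v_\alpha(n))^p$.

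The main technical obstacle I anticipate is the boundary case $\alpha = 1$ in part (i), where the Chernoff exponent takes the non-uniform-in-$n$ form $-cn^{1-1/(c\lambda)}$ and some care is needed to verify that it really dominates $-c'\lambda$ across the whole non-trivial range of $\lambda$ (the complementary range $\lambda \geq \log n$ being trivial since $S_n \geq n$ forces the probability to vanish there). Part~(ii) is comparatively routine given the heavy-tail literature, the only subtlety being the $\rho$-versus-$\alpha$ gap, which is forced by the requirement $p < \alpha$ needed for $\mean[Y_i^p]$ to be bounded independently of the truncation level $M$.
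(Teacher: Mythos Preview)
Your approach to Part~(i) is essentially the paper's: exponential Chebyshev combined with the asymptotics $1-L(s)\gtrsim s^{\alpha}$, $s\log(1/s)$, or $s$ in the three regimes. The paper, however, simply fixes $s=\lambda/v_\alpha(n)$ (so that $st=1$) and reads off the exponent directly, treating all $\alpha>0$ uniformly on the non-trivial range $n\lambda\le v_\alpha(n)$. This avoids your optimisation step and in particular sidesteps the $n$-dependent exponent $-n^{1-1/(c\lambda)}$ you obtain at $\alpha=1$ and the accompanying case analysis. Your optimisation does yield the sharper exponent $\lambda^{\alpha/(1-\alpha)}$ for $\alpha<1$, but that is not needed for the statement.

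For Part~(ii) the two routes genuinely diverge. The paper truncates at an \emph{intermediate} level $m=\lambda^\eta n^{1/\alpha}$ with $\eta<1$ and controls the truncated sum by a second exponential Chebyshev, obtaining
\[
nm^{-\alpha}+\bigl(1+cm^{-\alpha}\bigr)^n e^{-\lambda v_\alpha(n)/m}\le C\lambda^{-\eta\alpha};
\]
the gap between $\rho=\eta\alpha$ and $\alpha$ comes from $\eta<1$. Your moment-based approach (truncate at the full level $M=\lambda v_\alpha(n)$, then Markov with von Bahr--Esseen or Rosenthal on the centred sum) is the more common heavy-tail argument and works just as well --- indeed for $\alpha<1$ your direct Markov delivers the sharp $\lambda^{-\alpha}$. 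There is, however, a gap in your case split at $\alpha=1$: you ask for $p\in(\rho\vee 1,\alpha)=(1,1)$, which is empty. This is easy to repair --- e.g.\ extend the direct Markov step to $\alpha=1$, where $\mean[Y_i]\asymp\log M$ gives $n\mean[Y_i]/M\le C(\log\lambda)/\lambda\le C_\rho\lambda^{-\rho}$ for any $\rho<1$ --- but as written the boundary case is not covered.
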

\begin{proof} Towards checking part~(i), we first note that, for $\theta<1$,
\[\mean\left(e^{-\theta \tau_1}\right)\leq \left\{
                                             \begin{array}{ll}
                                               1-c\theta^\alpha, & \hbox{if $\alpha<1$;} \\
                                               1-c\theta\log(\theta^{-1}), & \hbox{if $\alpha=1$;} \\
                                               1-c\theta, & \hbox{if $\alpha>1$.}
                                             \end{array}
                                           \right.\]
Hence, for $n\lambda\leq v_\alpha(n)$,
\begin{align*}
\prob\left(S_n\leq \lambda^{-1}v_\alpha(n)\right)
&\leq C\mean\left(e^{-\lambda S_n/v_\alpha(n)}\right)\\
&= C\mean\left(e^{-\lambda \tau_1/v_\alpha(n)}\right)^n\\
&\leq  \left\{
                                             \begin{array}{ll}
                                               C\left(1-\frac{c\lambda^\alpha}{n}\right)^n, & \hbox{if $\alpha<1$;} \\
                                               C\left(1-\frac{c\lambda}{n}\right)^n, & \hbox{if $\alpha\geq1$.}
                                             \end{array}
                                           \right.
\end{align*}
Since $1-x\leq e^{-x}$, the desired conclusion thus holds in the range $n\lambda<v_\alpha(n)$. If $n\lambda>v_\alpha(n)$, then it trivially holds that
$\prob\left(S_n\leq \lambda^{-1}v_\alpha(n)\right)\leq \prob\left(S_n<n\right)=0$, since each individual summand is bounded below by 1.

As for part~(ii), elementary calculations yield that, for $\theta<1$,
\[\mean\left(e^{\theta \tau_1}\indicator_{\{\tau_1\leq \theta^{-1}\}}\right)\leq \left\{
                                             \begin{array}{ll}
                                               1+c\theta^\alpha, & \hbox{if $\alpha<1$;} \\
                                               1+c\theta\log(\theta^{-1}), & \hbox{if $\alpha=1$;} \\
                                               1+c\theta, & \hbox{if $\alpha>1$.}
                                             \end{array}
                                           \right.\]
Hence, for $\alpha<1$, if $m=\lambda^\eta n^{1/\alpha}$ for some $\eta\in(0,1)$, then
\begin{align*}
\prob\left(S_n\geq \lambda v_\alpha(n)\right)
&\leq n\prob\left(\tau_1\geq m\right)+ \mean\left(e^{S_n/m}\indicator_{\{\tau_1,\dots,\tau_n\leq m\}}\right)e^{- \lambda v_\alpha(n)/m}\\
&\leq nm^{-\alpha}+\left(1+cm^{-\alpha}\right)^ne^{- \lambda v_\alpha(n)/m}\\
&\leq \lambda^{-\eta\alpha}+e^{c\lambda^{-\eta \alpha}- \lambda^{1-\eta}}\\
&\leq C\lambda^{-\eta\alpha}.
\end{align*}
This completes the proof in this case. Taking $m=\lambda^\eta n$ for some $\eta\in(0,1)$, the argument for $\alpha=1$ is similar, and so we omit it. For $\alpha>1$, set $m=\lambda^\eta n^{1/\alpha}$ for some $\eta\in(0,1)$, then
\begin{align*}
\prob\left(S_n\geq \lambda v_\alpha(n)\right)
&\leq nm^{-\alpha}+\left(1+cm^{-1}\right)^ne^{- \lambda v_\alpha(n)/m}\\
&\leq \lambda^{-\eta\alpha}+e^{-(\lambda -c)\lambda^{-\eta}n^{1-1/\alpha}}\\
&\leq C\lambda^{-\eta\alpha}+Ce^{-c \lambda^{1-\eta}}\\
&\leq C\lambda^{-\eta\alpha},
\end{align*}
which completes the proof.
\end{proof}

{\bf Acknowledgements}:
This research was supported by JSPS Grant-in-Aid for Scientific Research (A) 22H00099, 23KK0050, (C) 19K03540, and the Research Institute for Mathematical Sciences, an International Joint Usage/Research Center located in Kyoto University. The authors thank Ryoki Fukushima for useful comments concerning the proof of Proposition~\ref{prop4-3}\,(ii), as well as two referees for catching various minor errors that appeared in an earlier version.

\bibliographystyle{abbrv}
\bibliography{literature}

\end{document}